\documentclass[11pt,english]{amsart}
\usepackage[T1]{fontenc}
\usepackage[latin9]{inputenc}
\usepackage{amsthm}
\usepackage{amstext}
\usepackage{esint}

\makeatletter
\usepackage{amsmath,amsfonts,amssymb,amsthm,epsfig}

\voffset=-1.5cm \textheight=23cm \hoffset=-.5cm \textwidth=16cm
\oddsidemargin=1cm \evensidemargin=-.1cm
\footskip=35pt \linespread{1.10}
\parindent=20pt

\usepackage{color}
\usepackage[final,allcolors=blue,colorlinks=true]{hyperref}


\def\R{\mathbb R}
\def\N{\mathbb N}
\def\C{\mathbb C}

\def\S{\mathbb S}
\def\X{\mathbb X}



\def\al{\alpha}

\def\de{\delta}

\def\la{\lambda}
\def\si{\sigma}

\def\om{\omega}
\def\na{\nabla}
\def\Om{\Omega}  
\def\De{\Delta}      



\def\cal{\mathcal}
\def\L{\mathcal L}                                       
\def\E{\mathcal E}                                      
\def\X{\mathbb X}                                       
\def\wq{\infty}
\def\pa{\partial}

\def\loc{\text{\rm loc}}
\def\rad{\text{\rm rad}}
\newcommand{\Ker}{\text{\rm Ker}}

\newcommand{\D}{{\rm d}}

\newcommand{\medint}{-\kern -,375cm\int}         
\newcommand{\medintinrigo}{-\kern -,315cm\int}
\newcommand{\wto}{\rightharpoonup}                
\newcommand\esssup{\text{\rm \,esssup\,}}


\numberwithin{equation}{section}
\textwidth15cm \textheight22cm \flushbottom
\oddsidemargin=0.5cm \evensidemargin=0.5cm
\footskip=35pt \linespread{1.10}
\parindent=20pt
\setcounter{MaxMatrixCols}{30}  

\newtheorem{theorem}{Theorem}[section]

\newtheorem{definition}[theorem]{Definition}

\newtheorem{lemma}[theorem]{Lemma}

\newtheorem{proposition}[theorem]{Proposition}

\theoremstyle{definition}


\makeatother

\usepackage{babel}
\begin{document}
\title[Quasilinear Schr\"odinger Equations]{Remarks on Nondegeneracy of Ground States for Quasilinear Schr\"odinger Equations} 

           \author[C.-L. Xiang]{Chang-Lin Xiang}      

\address[]{University of Jyv\"askyl\"a, Department of Mathematics and Statistics, P.O. Box 35, FI-40014 University of Jyv\"askyl\"a, Finland.}
\email[]{Xiang\_math@126.com}

\begin{abstract}
In this paper, we answer affirmatively the problem proposed by A. Selvitella in his paper "Nondegenracy of the ground state for quasilinear Schr\"odinger Equations" (see Calc. Var. Partial Differ. Equ., {\bf 53} (2015),   pp 349-364): every ground state  of  equation \begin{eqnarray*}-\Delta u-u\Delta |u|^2+\omega u-|u|^{p-1}u=0&&\text{in }\mathbb{R}^N\end{eqnarray*} is nondegenerate for $1<p<3$, where  $\omega>0$ is a given constant and $N\ge1$. We also derive further properties on the linear operator associated to ground states  of above equation.
\end{abstract}

\maketitle

{\small    
\keywords {\noindent {\bf Keywords:} Quasilinear Schr\"odinger  equations; Ground states;  Nondegeneracy}
\smallskip
\newline
\subjclass{\noindent {\bf 2010 Mathematics Subject Classification:}  Primary 35J62, 35Q55; Secondary 35J60}
\tableofcontents}
\bigskip

\section{Introduction and main result}

Consider the quasilinear Schr\"odinger equation
\begin{eqnarray}
i\pa_{t}U=-\De U-U\De|U|^{2}-|U|^{p-1}U &  & \text{in }\R^{N}\times\R_{+},\label{eq: Quasili-Schrodinger eq.}
\end{eqnarray}
where $U:\R^{N}\times\R_{+}\to\C$ is the wave function, $i$ is the
imaginary unit, $N\ge1$ and $p>1$. Equation (\ref{eq: Quasili-Schrodinger eq.})
arises in various domains of physics, such as superfluid film equation
in plasma physics. More physical background of equation (\ref{eq: Quasili-Schrodinger eq.})
can be found in e.g. Colin et al. \cite{Colin-Jeanjean- Squassina-2010}
and the references therein. Equation (\ref{eq: Quasili-Schrodinger eq.})
has been studied extensively in the literature, see e.g. \cite{Colin-2002,Colin-Jeanjean- Squassina-2010,Kenig-Ponce-Vega-2004,Lange-Poppenberg-Teismann-1999,Poppenberg-2001,Poppenberg-Schmitt- Wang-2002}
and the references therein. A special class of solutions to equation
(\ref{eq: Quasili-Schrodinger eq.}) that represent particles at rest
is the so called standing waves, that is, solutions of the form
\[
U(x,t)=e^{i\om t}u(x),
\]
where $\om>0$ is a given constant which stands for the time frequency,
and $u:\R^{N}\to\C$ is a complex valued function that is independent
of time $t\in\R_{+}$. It is elementary to verify that if $U(x,t)=e^{i\om t}u(x)$
is a standing wave, then $u$ solves the stationary equation 
\begin{eqnarray}
-\Delta u-u\Delta|u|^{2}+\omega u-|u|^{p-1}u=0 &  & \text{in }\R^{N}.\label{eq: Object equ.}
\end{eqnarray}

Equation (\ref{eq: Object equ.}) is known \cite{Colin-Jeanjean- Squassina-2010}
as the Euler-Lagrange equation of the energy functional $\E_{\om}:\X_{\C}\to\R$
defined as
\[
{\cal E}_{\om}(u)=\frac{1}{2}\int_{\R^{N}}|\na u|^{2}\D x+\int_{\R^{N}}|u|^{2}|\na|u||^{2}\D x+\frac{\om}{2}\int_{\R^{N}}|u|^{2}\D x-\frac{1}{p+1}\int_{\R^{N}}|u|^{p+1}\D x,
\]
where $\X_{\C}$ is the function space given by 
\[
\X_{\C}=\left\{ u\in H^{1}(\R^{N}):\int_{\R^{N}}|u|^{2}|\na|u||^{2}\D x<\wq\right\} .
\]
Here we assume throughout the paper that $1<p<p_{\max}$ holds, where
the critical exponent $p_{\max}$ is defined as 
\begin{equation}
p_{\max}\equiv\begin{cases}
\frac{3N+2}{N-2} & \text{if }N\ge3\\
\wq & \text{if }N=1,2.
\end{cases}\label{eq: p-max}
\end{equation}
Technically speaking, the condition that $p$ be strictly less than
$p_{\max}$ ensures that the power nonlinearity in equation (\ref{eq: Object equ.})
is $\X_{\C}$-subcritical. Indeed, by a simple application of Sobolev
embedding theorems, we infer that $\X_{\C}$ is continuously embedded
into $L^{p+1}(\R^{N})$ for $1<p<\wq$ if $N=1,2$ and $1<p\le(3N+2)/(N-2)$
if $N\ge3$. 

In view of the variational structures of equation (\ref{eq: Object equ.}),
critical point theory has been devoted to find solutions for equation
(\ref{eq: Object equ.}). Here, as in Colin et al. \cite{Colin-Jeanjean- Squassina-2010},
a function $u\in\X_{\C}$ is said to be a solution to equation (\ref{eq: Object equ.}),
if for any function $\phi\in C_{0}^{\wq}(\R^{N})$, the space of smooth
functions in $\R^{N}$ with compact support, there holds
\[
\text{{\rm Re}}\int_{\R^{N}}\Big(\na u\cdot\na\bar{\phi}+\na|u|^{2}\cdot\na(u\bar{\phi})+\om u\bar{\phi}-|u|^{p-1}u\bar{\phi}\Big)\D x=0
\]
(here $\text{{\rm Re}}z$ is the real part of $z\in\C$). The existence
of solutions to equation (\ref{eq: Object equ.}) is now well known,
see e.g. \cite{Colin-Jeanjean-2004,Colin-Jeanjean- Squassina-2010,Liu-Wang-Wang-2003,Liu-Wang-Wang-2004,Liu- Wang-2003}
and the references therein.

In this paper, we consider ground state to equation (\ref{eq: Object equ.}).
Following the convention of Colin et al. \cite{Colin-Jeanjean- Squassina-2010}
(see also Selvitella \cite{Selvitella-2011,Selvitella-2015}), we
say that a solution $u\in\X_{\C}$ to equation (\ref{eq: Object equ.})
is a \emph{ground state}, if $u$ satisfies 
\[
\E_{\om}(u)=\inf\left\{ \E_{\om}(v):v\in\X_{\C}\text{ is a nontrivial solution to equation (\ref{eq: Object equ.})}\right\} .
\]
We remark that the notion of ground state here is different from that
defined in \cite{Chang et al-2007,Frank-Lenzmann-2013,Frank-Lenzmann-Silvestre-2013,Kwong1989,Lenzmann-2009}.
We are concerned about the nondegeneracy (see below) of ground states.
Before proceeding further, let us summarize the existence result of
ground states to equation (\ref{eq: Object equ.}) together with a
list of basic properties for later use.

\begin{theorem} \label{thm: properties of GS} Assume that $1<p<p_{\max}$
with $p_{\max}$ defined as in (\ref{eq: p-max}). Then for any given
constant $\om>0$, there exists a ground state to equation (\ref{eq: Object equ.}).
Moreover, for any ground state $u\in\X_{\C}$ to equation (\ref{eq: Object equ.}),
there exist a constant $\theta\in\R$, a decreasing positive function
$v:[0,\wq)\to(0,\wq)$ and a point $x_{0}\in\R^{N}$ such that $u$
is of the form 
\begin{eqnarray*}
u(x)=e^{i\theta}v(|x-x_{0}|) &  & \text{for }x\in\R^{N}.
\end{eqnarray*}
Furthermore, the following properties hold.

(1) (Smoothness) $u\in C^{\wq}(\R^{N})$. 

(2) (Decay) For any multi-index $\al\in\N^{N}$ with $|\al|\ge0$,
there exist positive constants $C_{\al}>0$ and $\de_{\al}>0$ such
that 
\begin{eqnarray*}
|\pa^{\al}u(x)|\le C_{\al}\exp(-\de_{\al}|x|) &  & \text{for all }x\in\R^{N}.
\end{eqnarray*}

(3) (Uniqueness) In the case $N=1$, the ground states to equation
(\ref{eq: Object equ.}) is unique up to phase and translation. In
particular, there exists a unique positive even ground state for equation
(\ref{eq: Object equ.}).\end{theorem}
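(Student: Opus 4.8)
The plan is to remove the degenerate term $u\De|u|^{2}$ by the dual change of variables of Colin--Jeanjean, after first reducing to real nonnegative solutions. Writing a ground state as $u=e^{iS(x)}\rho$ with $\rho=|u|\ge0$, one has $|\na u|^{2}=|\na\rho|^{2}+\rho^{2}|\na S|^{2}$, so $\E_{\om}(u)\ge\E_{\om}(\rho)$ with equality only when $S$ is locally constant; thus $\rho$ is again a ground state and it suffices to treat real $u\ge0$. For such $u$ I would set $u=f(w)$, where $f\colon\R\to\R$ is the global smooth increasing diffeomorphism fixed by $f'(t)=(1+2f(t)^{2})^{-1/2}$, $f(0)=0$. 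A direct computation then collapses the quadratic part $\tfrac12|\na u|^{2}+|u|^{2}|\na|u||^{2}$ to $\tfrac12|\na w|^{2}$ and converts (\ref{eq: Object equ.}) into the semilinear equation
\[
-\De w+\om f(w)f'(w)=|f(w)|^{p-1}f(w)f'(w)\qquad\text{in }\R^{N},
\]
whose energy functional is $J(w)=\tfrac12\int|\na w|^{2}+\tfrac{\om}{2}\int f(w)^{2}-\tfrac{1}{p+1}\int|f(w)|^{p+1}$. Since $f(t)^{2}\lesssim t^{2}$ and, at infinity, $|f(t)|^{p+1}\sim|t|^{(p+1)/2}$, the restriction $1<p<p_{\max}$ is exactly the condition that makes the nonlinearity of $J$ be $H^{1}$-subcritical. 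The degeneracy of the original problem is the core difficulty, and this transformation is the device that overcomes it.

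Existence and the symmetric form then follow from the semilinear theory applied to $J$. I would produce a least-energy critical point by constrained minimization; the lack of compactness on $\R^{N}$ is restored by passing to Schwarz symmetrizations, using that a P\'olya--Szeg\H{o} type inequality does not increase $J$ and that the radial Sobolev (Strauss) embedding $H^{1}_{\rad}\hookrightarrow L^{p+1}$ is compact for $1<p<p_{\max}$. A minimizer may thus be taken radial and nonincreasing; undoing $u=f(w)$ gives $u=v(|x-x_{0}|)$ with $v>0$ decreasing, and the phase reduction restores the factor $e^{i\ta}$, yielding both existence and the stated form.

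For smoothness (1) I would work on the semilinear equation: because $f$ and $f'$ are smooth and the right-hand side is subcritical, Brezis--Kato and Moser iteration give $w\in W^{2,q}_{\loc}$ for every $q$, hence $w\in C^{1,\al}_{\loc}$, and iterating Schauder estimates yields $w\in C^{\wq}(\R^{N})$; since $f$ is a smooth diffeomorphism, $u=f(w)\in C^{\wq}(\R^{N})$. For the decay (2), the key observation is that $f(w)f'(w)\sim w$ as $w\to0$, so near infinity the equation behaves like $-\De w+\om w\approx0$; comparing $w$ with the supersolution $Ce^{-\de|x|}$ for any $0<\de<\sqrt{\om}$ via the maximum principle gives exponential decay of $w$, and differentiating the equation and repeating the comparison propagates the bound to every derivative. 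Transporting these estimates back through $f$ gives (2) for $u$.

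The assertion requiring the most care, and the point I expect to be the main obstacle, is the uniqueness (3) in dimension $N=1$. Here the transformed equation is the autonomous ODE $w''=g(w)$ with $g(w)=\om f(w)f'(w)-|f(w)|^{p-1}f(w)f'(w)$, and multiplying by $w'$ yields the first integral $\tfrac12(w')^{2}=G(w)$ along any orbit homoclinic to $0$, where $G'=g$ and $G(0)=0$. Since $g'(0)=\om>0$, the origin is a hyperbolic equilibrium, and a positive even ground state is the orbit that leaves the origin, reaches a maximum height $b>0$ with $w'=0$, hence $G(b)=0$, and returns symmetrically. I would verify from the definition of $f$ that $g>0$ on $(0,w_{0})$ and $g<0$ on $(w_{0},\wq)$ for a single $w_{0}>0$, so that $G$ rises then falls with a unique positive root $b$ and $G>0$ on $(0,b)$; this pins down a single homoclinic loop, giving uniqueness up to translation, while the reflection invariance of the equation gives evenness. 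The delicate point is extracting these sign properties of $g$ from the implicitly defined $f$; once done, transporting the orbit back through $f$ produces the unique positive even ground state $v$, and together with the phase reduction of the first step this yields uniqueness up to phase and translation.
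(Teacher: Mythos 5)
The paper never proves Theorem \ref{thm: properties of GS} itself: it is a summary statement whose proof is explicitly deferred to Colin et al. \cite{Colin-Jeanjean- Squassina-2010} and Selvitella \cite{Selvitella-2011}, and those references argue exactly along your lines, via the dual change of variables $u=f(w)$, $f'(t)=(1+2f(t)^{2})^{-1/2}$, $f(0)=0$. The parts you carry out are sound: the quadratic part does collapse to $\tfrac12\int|\na w|^{2}$, the transformed equation is the one you write, the exponent count $(p+1)/2<2N/(N-2)\Longleftrightarrow p<(3N+2)/(N-2)$ is precisely what explains $p_{\max}$, and the regularity, decay and one-dimensional phase-plane arguments are standard and correct. (Incidentally, the sign property of $g$ that you flag as the delicate point is immediate: for $w>0$ one has $\mathrm{sign}\,g(w)=\mathrm{sign}\bigl(\om-f(w)^{p-1}\bigr)$ and $f$ is strictly increasing, so there is nothing to extract.) The genuine gaps are both located in the reduction of an arbitrary complex ground state to a real, positive, radial one --- which is exactly the part of the statement asserting that \emph{every} ground state has the form $e^{i\theta}v(|x-x_{0}|)$.

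First, the inference ``$\E_{\om}(\rho)\le\E_{\om}(u)$, hence $\rho=|u|$ is again a ground state'' is a non sequitur: by definition a ground state minimizes $\E_{\om}$ among nontrivial \emph{solutions} of (\ref{eq: Object equ.}), and $\rho$ is not known to solve the equation, so it is not a competitor and the energy inequality alone proves nothing. (Moreover the polar decomposition $u=e^{iS}\rho$ is only formal on the zero set of $u$; the rigorous tool is the diamagnetic inequality $|\na|u||\le|\na u|$ a.e., and the equality analysis requires first knowing $\rho>0$ everywhere, via a strong maximum principle, before one may conclude the phase is globally constant.) The repair, which is what \cite{Colin-Jeanjean- Squassina-2010} actually does, is to identify the ground-state energy with a purely variational level of the dual functional $J$ (a mountain-pass level, or minimization under a constraint invariant under modulus and symmetrization); then $|u|$ is a competitor for \emph{that} problem, attains the level, and is therefore itself a solution, hence a ground state. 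Second, your Schwarz symmetrization argument produces \emph{one} radial nonincreasing minimizer (``a minimizer may thus be taken radial''); it does not show that an arbitrary ground state is radial, which is what the theorem claims. For that one needs, after positivity, the moving-plane theorem of Gidas--Ni--Nirenberg \cite{G-N-N-1981} applied to the transformed equation $-\De w=h(w)$ with $h(w)=|f(w)|^{p-1}f(w)f'(w)-\om f(w)f'(w)$, which is admissible since $h(0)=0$ and $h'(0)=-\om<0$; this yields that every positive decaying solution is radially symmetric about some $x_{0}$ and strictly decreasing, whence the stated form of $u=f(w)$. With these two repairs your outline coincides with the proofs in the references the paper cites.
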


For a complete proof of Theorem \ref{thm: properties of GS}, we refer
to Colin et al. \cite{Colin-Jeanjean- Squassina-2010} and Selvitella
\cite{Selvitella-2011}.

In this paper, our aim is to study nondegeneracy of ground states
for equation (\ref{eq: Object equ.}). The motivation comes from the
fact that the nondegeneracy of ground states for equation (\ref{eq: Object equ.})
plays an important role when studying the existence of concentrating
solutions in the semiclassical regime. We refer the readers to Selvitella
\cite{Selvitella-2015} for more applications of nondegeneracy results.
We also follow the convention of Selvitella \cite{Selvitella-2015}
(see also Ambrosetti and Malchiodi \cite{Ambrosetti-Malchiodi-book})
and define nondegeneracy of ground states for equation (\ref{eq: Object equ.})
as follows.

\begin{definition} \label{def: nondegeneracy}Let $u\in\X_{\C}$
be a ground state of equation (\ref{eq: Object equ.}). We say that
$u$ is nondegenerate if the following properties hold:

(1) (ND) $\Ker\E_{\om}^{\prime\prime}(u)=\text{{\rm span}}\left\{ iu,\pa_{x_{1}}u,\cdots,\pa_{x_{N}}u\right\} $;

(2) (Fr) $\E_{\om}^{\prime\prime}(u)$ is an index 0 Fredholm map.\end{definition}

The first result on nondegeneracy of ground states for equation (\ref{eq: Object equ.})
was obtained by Selvitella \cite{Selvitella-2011} in a perturbative
setting, where uniqueness of ground states for equation (\ref{eq: Object equ.})
was also considered. In his quite recent paper \cite{Selvitella-2015},
Selvitella proved, under the assumption 
\[
p\ge3,
\]
 that every ground state of equation (\ref{eq: Object equ.}) is nondegenerate
in the sense of Definition \ref{def: nondegeneracy} above, see Theorem
1.2 of \cite{Selvitella-2015}. Selvitella also commented (see Remark
1.3 of \cite{Selvitella-2015}) that his nondegeneracy result could
also be true for the case $1<p<3$. However, his approach can not
handle this case. In this paper, we give an affirmative answer to
his question. We obtain the following result. 

\begin{theorem} \label{thm: main result} For $1<p<3$, every ground
state of equation (\ref{eq: Object equ.}) is nondegenerate in the
sense of Definition \ref{def: nondegeneracy} above.\end{theorem}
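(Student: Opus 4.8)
The plan is to convert the quasilinear problem into a semilinear one by the standard change of variables and then to run a spectral analysis on the transformed linearized operator. By Theorem~\ref{thm: properties of GS} I may assume, after a phase rotation and a translation, that $u=v$ with $v>0$ radial, strictly decreasing and exponentially decaying together with all derivatives. Writing a perturbation as $\eta=\phi+i\psi$ ($\phi,\psi$ real) and expanding $\E_\om(v+\ep\eta)$ to second order, the Hessian $\E_\om''(v)$ decouples (the cross terms vanish because $v$ is real) into two real self-adjoint operators: $L_+$, acting on the real part $\phi$, and $L_-$, acting on the imaginary part $\psi$. Phase invariance puts $iu$ in $\Ker L_-$ and translation invariance puts $\pa_{x_1}u,\dots,\pa_{x_N}u$ in $\Ker L_+$, so (ND) is the assertion that these generators exhaust the two kernels. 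The Fredholm condition (Fr) is the easier half: the principal part of $\E_\om''(v)$ is built from $-\divergence((1+2v^2)\na\,\cdot\,)+\om$, whose coefficient tends to $1$ at infinity, so its essential spectrum is $[\om,\wq)$ and it is Fredholm of index $0$, while the zeroth-order terms have coefficients assembled from $v$ and its derivatives and therefore decay exponentially by Theorem~\ref{thm: properties of GS}(2); they form a relatively compact perturbation and leave the index unchanged. Hence the whole difficulty sits in (ND).

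To compute the kernels I introduce $f\colon\R\to\R$ with $f(0)=0$ and $f'(t)=(1+2f(t)^2)^{-1/2}$, and set $v=f(w)$. Using the pointwise identities $f''=-2f(f')^4$ and $(ff')'=(f')^4$, one verifies that $v$ solves the radial form of (\ref{eq: Object equ.}) if and only if $w$ solves the semilinear equation $-\De w+\om f(w)f'(w)=f(w)^{p}f'(w)$, and that $w$ is again positive, radial, decreasing and exponentially decaying. The substitution also intertwines the linearizations: the map $\phi\mapsto\phi/f'(w)$ carries $L_+$ to the Schr\"odinger operator $L_+^{\rm sl}=-\De+\om(ff')'(w)-(f^{p}f')'(w)$, and similarly $L_-$ to an operator $L_-^{\rm sl}=-\De+V_-(w)$ whose potential comes from the undifferentiated nonlinearity, with $L_-^{\rm sl}w=0$. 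This reduces (ND) to the nondegeneracy of the positive radial solution $w$ of a scalar semilinear problem, to which the classical machinery applies. For the imaginary sector this machinery works for every $p$: since $w>0$ solves $L_-^{\rm sl}w=0$ and $L_-^{\rm sl}$ is a Schr\"odinger operator, $0$ is its lowest eigenvalue with one-signed eigenfunction, so $\Ker L_-^{\rm sl}=\text{{\rm span}}\{w\}$ by Perron--Frobenius, and transporting back gives $\Ker L_-=\text{{\rm span}}\{iu\}$.

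It remains to control $L_+^{\rm sl}$, which I would decompose into spherical-harmonic sectors. On the sector $\ell=1$ the derivatives $\pa_{x_j}w$ are kernel elements, and because $\pa_r w<0$ has no interior zero they are the only decaying solutions there, yielding exactly the $N$ translational modes. On the sectors $\ell\ge2$ the centrifugal term strictly raises the potential, the operator is positive, and its kernel is trivial; both facts hold for all $p$. What is left, and what the theorem is really about, is the radial sector: I must show that the only radial decaying solution of $L_+^{\rm sl}\chi=0$ is $\chi\equiv0$.

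This radial nondegeneracy is exactly where \cite{Selvitella-2015} needed $p\ge3$, and it is the main obstacle. I would attack it by a Sturm-type comparison between a hypothetical radial kernel element $\chi$ and $\pa_r w$, counting sign changes as in Kwong's analysis of $-\De w+\om w=w^{q}$; the comparison closes as soon as the transformed nonlinearity $g(w)=f(w)^{p}f'(w)$ obeys the right superlinearity condition, for example that $w\mapsto wg'(w)-g(w)$ keeps a fixed sign on the range $[0,\max w]$ swept out by the ground state. The difficulty is that $g$ is not a pure power: near $w=0$ it behaves like $w^{p}$, but for large argument $f(w)\sim w^{1/2}$ forces $g(w)\sim w^{(p-1)/2}$, and the exponent $(p-1)/2$ falls below $1$ precisely when $p<3$, destroying the convexity that drives the proof in the regime $p\ge3$. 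The technical heart of the argument is therefore to establish the sign of $wg'-g$ (equivalently, that $\chi$ can change sign at most once, which with the Wronskian identity against $\pa_r w$ forces $\chi\equiv0$) uniformly for $1<p<3$, using the explicit relations satisfied by $f$. I expect this estimate, not any of the structural reductions above, to be the crux.
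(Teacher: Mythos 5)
Your structural reductions are sound and essentially coincide with the paper's setup: the splitting of $\E_{\om}^{\prime\prime}(u)$ into $\L_{+}$ (real part) and $\L_{-}$ (imaginary part), the spherical-harmonic decomposition, the treatment of the sectors $\ell=1$ (where $\pa_{r}u<0$ is a ground state of the sector operator) and $\ell\ge2$ (where the centrifugal term forces triviality), the Perron--Frobenius argument for $\L_{-}$, and the relative-compactness argument for (Fr). The dual change of variables $v=f(w)$, $f'=(1+2f^{2})^{-1/2}$, and the conjugation $\phi\mapsto\phi/f'(w)$ of the Hessians is also legitimate at a critical point. But none of this is where the theorem lives, and you say so yourself: everything above works for all $1<p<p_{\max}$ and is already in \cite{Selvitella-2015}. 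The entire content of the statement is the radial sector, and there your proposal has a genuine gap. The route you sketch --- a Kwong-type Sturm comparison hinging on a superlinearity condition such as $wg'(w)-g(w)\ge0$ for $g(w)=f(w)^{p}f'(w)$ --- is precisely the route that \emph{fails} for $1<p<3$: as your own asymptotics show, $g(w)\sim w^{(p-1)/2}$ for large $w$, so the required convexity degenerates exactly in the range the theorem addresses. You end by saying you "expect this estimate to be the crux" without establishing it; that is not a proof but a restatement of the open problem the paper solves.

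The paper closes this gap by a different mechanism that avoids any superlinearity of the dualized nonlinearity. It works directly with the quasilinear operator $\L_{+}$ (no change of variables) and shows: (i) $\si_{\text{cont}}(\L_{+})\subset[\om,\wq)$, so $0\in\si_{\text{disc}}(\L_{+})$; (ii) the first eigenvalue $\mu_{1}=\inf\si(\L_{+})$ is negative and simple, with a strictly positive eigenfunction $e_{1}$. The negativity is the delicate point --- one cannot just test with $u$, since the sign of $\langle\L_{+}u,u\rangle$ is unclear for $N\ge3$ --- and is obtained by contradiction: if $\mu_{1}=0$, then the positive part of any kernel element minimizes the Rayleigh quotient, hence is itself an eigenfunction and, by Serrin's strong comparison principle, is either identically zero or everywhere positive; applying this to $\pa_{x_{1}}u\in\Ker\L_{+}$, whose positive part vanishes on the half-space $\{x_{1}\ge0\}$ without being trivial, gives a contradiction. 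Consequently $0$ is \emph{not} the lowest eigenvalue, so any radial $v\in\Ker\L_{+}$ satisfies $\int_{\R^{N}}v\,e_{1}\,\D x=0$ with $e_{1}>0$, forcing $v$ to change sign; Kwong's disconjugacy argument (valid for all $1<p<p_{\max}$) then rules out such a sign-changing radial kernel element. If you want to complete your proof, replace your comparison step with this orthogonality-to-the-ground-state argument; your transformed operator $L_{+}^{\rm sl}$ inherits it verbatim, but the superlinearity route you propose cannot be repaired for $p<3$.
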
 

We remark that our argument is applicable to the whole range $1<p<p_{\max}$. 

As already remarked by Selvitella (see Remark 1.3 of \cite{Selvitella-2015}),
except Proposition 3.10 of \cite{Selvitella-2015} that requires him
to assume $p\ge3$, all the rest of his arguments can be applied to
the whole range $1<p<p_{\max}$ to prove Theorem \ref{thm: main result}.
So in this paper, we will follow the line of Selvitella \cite{Selvitella-2015}
to prove Theorem \ref{thm: main result}. However, since his approach
can not handle the whole range $1<p<p_{\max}$, we will apply a different
idea from that of Selvitella \cite{Selvitella-2015}. Precisely, let
$u$ be a positive radial ground state of equation (\ref{eq: Object equ.}).
Define the linear operator $\L_{+}$ associated to $u$ by 
\[
\L_{+}\eta=-\De\eta-2u\De(u\eta)+\om\eta-(\De u^{2}+pu^{p-1})\eta.
\]
We note that $\L_{+}$ is a self-adjoint operator acting on $L^{2}(\R^{N})$
with form domain $\X_{\C}$ and operator domain $H^{2}(\R^{N})$.
It turns out that the key to prove Theorem \ref{thm: main result}
is to show that $\L_{+}$ satisfies 
\begin{equation}
\Ker\L_{+}=\text{{\rm span}}\left\{ \pa_{x_{1}}u,\cdots,\pa_{x_{N}}u\right\} .\label{eq: Kernel 1.1}
\end{equation}
In the approach of Selvitella \cite{Selvitella-2015} to (\ref{eq: Kernel 1.1}),
ordinary differential equation analysis plays a central role, in which
the assumption $p\ge3$ is required. To prove (\ref{eq: Kernel 1.1})
for $p$ in the whole range $1<p<p_{\max}$, we will use a spectrum
analysis to the operator $\L_{+}$. In this way, we obtain deeper
results on the operator $\L_{+}$ than (\ref{eq: Kernel 1.1}). 

Our idea comes from the spectrum analysis of Chang et al. \cite{Chang et al-2007},
even through we can not use their refined arguments to derive (\ref{eq: Kernel 1.1})
directly. Roughly speaking, Chang et al. \cite{Chang et al-2007}
considered the following problem. Let $Q$ the unique positive radial
solution in $H^{1}(\R^{N})$ to the equation 
\begin{eqnarray}
-\De Q+\om Q-Q^{q}=0 &  & \text{in }\R^{N},\label{eq: NLS}
\end{eqnarray}
where $1<q<(N+2)/(N-2)$. Here we assume $N\ge3$ for simplicity.
Chang et al. \cite{Chang et al-2007} studied the spectrum of the
linear operator ${\cal A}_{+}$ around $Q$ given by 
\begin{equation}
{\cal A}_{+}\eta=-\De\eta+\om\eta-qQ^{q-1}\eta,\label{eq: operator of Chang}
\end{equation}
acting on $L^{2}(\R^{N})$ with form domain $H^{1}(\R^{N})$ and operator
domain $H^{2}(\R^{N})$. For the importance of the spectrum of ${\cal A}_{+}$,
we refer the readers to Chang et al. \cite{Chang et al-2007}. We
remark that Chang et al. \cite{Chang et al-2007} studied far more
than ${\cal A}_{+}$ in their work. We will give a brief comparison
between the two self-adjoint operators ${\cal A}_{+}$ and $\L_{+}$
below. We also refer the readers to \cite{Frank-Lenzmann-2013,Frank-Lenzmann-Silvestre-2013,Lenzmann-2009}
for spectrum analysis for linearized operators around ground states
of nonlocal problems. 

It seems that the spectrum $\si(\L_{+})$ of $\L_{+}$ has not been
studied in the literature. According to the analysis in next section,
we obtain the following properties for the spectrum of $\L_{+}$:

(1) the continuous spectrum of $\L_{+}$ is contained in $[\om,\wq)$
(see Lemma \ref{lem: essential spectrum});

(2) $\inf\si(\L_{+})<0$ is the first eigenvalue of $\L_{+}$ and
is simple (see Lemma \ref{lem: first eigenvalue is simple and negative}); 

(3) as a consequence of (1) and (2), $0$ belongs to the discrete
spectrum of $\L_{+}$ and is not the first eigenvalue of $\L_{+}$. 

We will give the proof of Theorem \ref{thm: main result} together
with above properties of $\si(\L_{+})$ in the next section. Before
we close this section, let us address some differences between the
two self-adjoint operators ${\cal A}_{+}$ defined as in (\ref{eq: operator of Chang})
and $\L_{+}$. 

First we point out that the second property (2) is not obvious. In
fact, even the fact $\inf\si(\L_{+})<0$ is not obvious. For the operator
${\cal A}_{+}$, a simple observation gives that 
\[
\langle{\cal A}_{+}Q,Q\rangle=-(q-1)\int_{\R^{N}}Q^{q+1}\D x<0,
\]
which implies $\inf\si(A_{+})<0$. Furthermore, it is standard (see
e.g. Lieb and Loss \cite{Lieb-Loss-book}) to show that $\inf\si({\cal A}_{+})$
is the first eigenvalue of ${\cal A}_{+}$ and is simple. However,
in our case, a direct calculation gives us 
\[
\langle\L_{+}u,u\rangle=8\int_{\R^{N}}u^{2}|\na u|^{2}\D x-(p-1)\int_{\R^{N}}u^{p+1}\D x.
\]
We will confirm that $\langle\L_{+}u,u\rangle<0$ in the case $N=2$,
based on a Pohozaev type identity, see the end of next section. While
in higher dimensions $N\ge3$, it is still not clear from above expression
whether $\langle\L_{+}u,u\rangle$ is negative or not. Hence we can
not infer that $\inf\si(\L_{+})<0$ holds by such a simple observation
as above. Similarly, due to the presence of the quasilinear term $-2u\De(u\eta)$
in $\L_{+}$, it is not obvious as well that a nonnegative eigenfunction
of $\L_{+}$ is in fact positive everywhere. 

Second, we point out that we do not known whether $0$ is the second
eigenvalue of $\L_{+}$ or not. Then we can not give exact estimates
on the numbers of nodal domains of radial functions $v$ with $v\in\Ker\L_{+}$.
Thus we can not use the arguments of Chang et al. \cite{Chang et al-2007}
directly (see the proof of Lemma 2.1 of Chang et al. \cite{Chang et al-2007}).
As to the operator ${\cal A}_{+}$, it is known (see e.g. Chang et
al. \cite{Chang et al-2007}) that $0$ is the second eigenvalue of
${\cal A}_{+}$. This is due to fact that, by uniqueness, $Q$ is
also a minimizer (up to rescaling) of the 'Weinstein' functional 
\begin{eqnarray*}
W(f)=\frac{\|\na f\|_{2}^{(q-1)N/2}\|f\|_{2}^{(N+2-(N-2)q)/2}}{\|f\|_{q+1}^{q+1}}, &  & f\in H^{1}(\R^{N}),f\not\equiv0.
\end{eqnarray*}
In our case, except the case $N=1$ (see Colin et al. \cite{Colin-Jeanjean- Squassina-2010}),
the uniqueness of positive solutions to equation (\ref{eq: Object equ.})
is unknown in general when $N\ge2$. In fact, even the uniqueness
of ground states to equation (\ref{eq: Object equ.}) is unknown in
general when $N\ge2$. Some partial results on uniqueness were obtained
in the literature. Since this is out of the scope of this paper, we
refer the interested readers to Selvitella \cite{Selvitella-2011,Selvitella-2015}
and the references therein. 

Our nations are standard. We write $\R_{+}=(0,\wq)$ and denote $\N=\{0,1,2,\cdots\}$
the set of nonnegative integers. For any $1\le s\le\infty$, $L^{s}(\R^{N})$
is the Banach space of complex valued Lebesgue measurable functions
$u$ such that the norm 
\[
\|u\|_{s}=\begin{cases}
\left(\int_{\R^{N}}|u|^{s}\D x\right)^{\frac{1}{s}} & \text{if }1\le s<\infty\\
\esssup_{\R^{N}}|u| & \text{if }s=\infty
\end{cases}
\]
is finite. A function $u$ belongs to the Sobolev space $H^{k}(\R^{N})$
($k\in\N$) if $u\in L^{2}(\R^{N})$ and its weak partial derivatives
up to order $k$ also belong to $L^{2}(\R^{N})$. We equip $H^{k}(\R^{N})$
with the norm 
\[
\|u\|_{H^{k}}=\sum_{\al\in\N^{N},|\al|\le k}\|\pa^{\al}u\|_{2}.
\]
For the properties of the Sobolev functions, we refer to the monograph
\cite{Ziemer}. By abuse of notation, we write $f(x)=f(r)$ with $r=|x|$
whenever $f$ is a radially symmetric function in $\R^{N}$.

\section{Proof of main result}

In this section we prove Theorem \ref{thm: main result}. Since we
deal with the same problem as that of Selvitella \cite{Selvitella-2015},
we will follow the line of Selvitella \cite{Selvitella-2015}. Similar
lines can also be found in e.g. \cite{Chang et al-2007,Frank-Lenzmann-2013,Frank-Lenzmann-Silvestre-2013,Lenzmann-2009}
and the monograph \cite{Ambrosetti-Malchiodi-book}.

\subsection{Proof of Theorem \ref{thm: main result}}

Let $u\in\X_{\C}$ be an arbitrary ground state for equation (\ref{eq: Object equ.}).
By Definition \ref{def: nondegeneracy}, to prove Theorem \ref{thm: main result}
we have to show that $\E_{\om}^{\prime\prime}(u)$ satisfies property
(ND) and property (Fr). The property (Fr) can be proved by the same
argument as that of Selvitella \cite{Selvitella-2015}, since which
is applicable to the whole range $1<p<p_{\max}$. So we omit the details. 

We focus on the proof of the property (ND), that is, we prove in the
following that 
\begin{equation}
\Ker\E_{\om}^{\prime\prime}(u)=\text{{\rm span}}\left\{ iu,\pa_{x_{1}}u,\cdots,\pa_{x_{N}}u\right\} .\label{eq: ND}
\end{equation}

By Theorem \ref{thm: properties of GS}, every ground state of equation
(\ref{eq: Object equ.}) can be regarded as a positive, radial and
symmetric-decreasing ground state. Hence we assume in the sequel that
$u=u(|x|)>0$ is a positive, radial and symmetric-decreasing ground
state for equation (\ref{eq: Object equ.}). We also assume $N\ge2$
in the sequel. In the case $N=1$ the proof of (\ref{eq: ND}) is
similar and even simpler. Then the linearized operator $\E_{\om}^{\prime\prime}(u)$
is giving by 
\[
\E_{\om}^{\prime\prime}(u)\xi=-\De\xi-2u\De\left(u\text{Re}\xi\right)+\om\xi-\left(\De u^{2}\right)\xi-(p-1)u^{p-1}\text{Re}\xi-u^{p-1}\xi
\]
acting on $L^{2}(\R^{N})$ with form domain $\X_{\C}$ and operator
domain $H^{2}(\R^{N})$. 

Note that $\E_{\om}^{\prime\prime}(u)$ is not even $\C$-linear.
To overcome this difficulty, it is preferable to introduce the linear
operator $\L_{+}$ given by 
\begin{equation}
\L_{+}\eta=-\De\eta-2u\De(u\eta)+\om\eta-(\De u^{2}+pu^{p-1})\eta,\label{operator: real part of linearization}
\end{equation}
acting on $L^{2}(\R^{N})$ with form domain $\X_{\C}$ and operator
domain $H^{2}(\R^{N})$, and the linear operator $\L_{-}$ given by
\[
\L_{-}\zeta=-\De\zeta+\om\zeta-(\De u^{2}+u^{p-1})\zeta
\]
acting on $L^{2}(\R^{N})$ with form domain $H^{1}(\R^{N})$ and operator
domain $H^{2}(\R^{N})$. Then for any $\xi\in H^{2}(\R^{N})$ we obtain
\[
\E_{\om}^{\prime\prime}(u)\xi=\L_{+}\text{Re}\xi+i\L_{-}\text{Im}\xi
\]
(here $\text{Im}z$ is the imaginary part of $z\in\C$). Therefore,
to prove (\ref{eq: ND}), it is sufficient to prove the following
result. 

\begin{proposition} \label{prop: two kernels} Let $\L_{+}$ and
$\L_{-}$ be defined as above. We have that 
\begin{equation}
\Ker\L_{+}=\text{{\rm span}}\left\{ \pa_{x_{1}}u,\cdots,\pa_{x_{N}}u\right\} \label{eq: kernel 1}
\end{equation}
 and 
\begin{equation}
\Ker\L_{-}=\text{{\rm span}}\left\{ u\right\} .\label{eq: kernel 2}
\end{equation}
\end{proposition}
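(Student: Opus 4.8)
The plan is to prove the two kernel identities separately, disposing of $\L_-$ first because it is a genuine Schr\"odinger operator. Writing $\L_-=-\De+V_-$ with $V_-=\om-\De u^2-u^{p-1}$, the exponential decay in Theorem~\ref{thm: properties of GS}(2) gives $V_-\to\om$ at infinity, so $\si(\L_-)$ has essential spectrum $[\om,\wq)\subset(0,\wq)$. A direct computation using (\ref{eq: Object equ.}) shows $\L_-u=0$, and since $u>0$, the Perron--Frobenius/strong maximum principle for Schr\"odinger operators makes the principal eigenfunction positive and simple; a positive function cannot be orthogonal to the positive principal eigenfunction, so $0=\inf\si(\L_-)$ is that simple principal eigenvalue and $\Ker\L_-=\text{\rm span}\{u\}$. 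This part is routine.

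For $\L_+$ I would first verify $\pa_{x_j}u\in\Ker\L_+$ by differentiating (\ref{eq: Object equ.}) in $x_j$ and applying the product rule to $u\De u^2$. The key structural observation is that $\L_+$ is in divergence form: expanding the quasilinear term yields $\L_+\eta=-\divergence\big((1+2u^2)\na\eta\big)+V_+\eta$ for a suitable potential $V_+$, so $\L_+$ is a self-adjoint, uniformly elliptic Schr\"odinger-type operator to which maximum-principle and Perron--Frobenius arguments apply; in particular its essential spectrum is again $[\om,\wq)$ and its principal eigenvalue is simple with a positive, radial eigenfunction $\phi_0$. To see $\inf\si(\L_+)<0$ for all $N\ge2$ I would use the radial test function $\phi(x)=u'(|x|)$. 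Since $\pa_{x_j}u=u'(|x|)x_j/|x|$ lies in $\Ker\L_+$, the angular reduction below gives $\L_+\phi=-\frac{N-1}{|x|^2}(1+2u^2)u'$ as a radial function, and hence $\langle\L_+\phi,\phi\rangle=-(N-1)\int_{\R^N}\frac{(1+2u^2)(u')^2}{|x|^2}\,\D x<0$ (understood via the quadratic form; the integral converges since $u'(r)\sim cr$ near the origin). This replaces the dimension-dependent Pohozaev computation.

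The heart of the argument is the spherical-harmonic decomposition. Because $u$ is radial, $\L_+$ commutes with rotations and restricts to each angular-momentum sector as $\L_+^{(\ell)}=\L_+^{(0)}+\frac{\ell(\ell+N-2)}{|x|^2}(1+2u^2)$, where $\L_+^{(0)}$ is the radial part. For $\ell\ge2$ the centrifugal term strictly exceeds that at $\ell=1$, so $\L_+^{(\ell)}>\L_+^{(1)}\ge0$ and these sectors contribute nothing to the kernel. At $\ell=1$, the profile $u'$ is a zero mode of $\L_+^{(1)}$ which is strictly negative on $(0,\wq)$ by the strong maximum principle, hence nodeless and therefore the ground state of $\L_+^{(1)}$; since radial eigenvalues are simple, the $\ell=1$ kernel is exactly $\text{\rm span}\{u'\}$, which lifts through the $N$ first spherical harmonics to $\text{\rm span}\{\pa_{x_1}u,\dots,\pa_{x_N}u\}$.

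The remaining and decisive step is the radial sector $\ell=0$: I must show $0$ is not an eigenvalue of $\L_+^{(0)}$. After a Liouville transformation $\L_+^{(0)}$ becomes a regular singular Sturm--Liouville operator on $(0,\wq)$ with weight $r^{N-1}$ and positive leading coefficient $r^{N-1}(1+2u^2)$, so its eigenvalues are simple and ordered by their number of nodes, with principal eigenvalue $\inf\si(\L_+)<0$ and nodeless $\phi_0>0$; it therefore suffices to prove the \emph{second} radial eigenvalue is strictly positive. This is exactly where Selvitella's $p\ge3$ ordinary differential equation analysis breaks down and must be bypassed by the spectral viewpoint, and it is the main obstacle. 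I expect to exploit the equation-specific identity $\L_+(\pa_\om u)=-u$ (obtained by differentiating (\ref{eq: Object equ.}) in $\om$), whose Rayleigh quotient ties the sign of the quadratic form on the putative zero-energy direction to that of $\pa_\om\|u\|_2^2$, together with Wronskian/Sturm comparison against the nodeless $\ell=1$ zero mode $u'$. Such a comparison produces orthogonality constraints forcing any radial zero mode to change sign; the difficulty is to convert these constraints, through node counting consistent with the ordering of the simple radial eigenvalues, into a contradiction. Establishing that the second radial eigenvalue is positive is the crux on which the whole proof turns.
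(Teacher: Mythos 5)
Your treatment of $\L_-$, of the sectors $\ell\ge 1$, and your two structural observations about $\L_+$ are correct, and two of them are genuinely cleaner than what the paper does: writing $\L_+\eta=-\divergence\bigl((1+2u^2)\na\eta\bigr)+V_+\eta$ makes the Perron--Frobenius property of $\L_+$ transparent, and the test function $\phi(x)=u'(|x|)$, for which indeed $\L_+\phi=-\frac{N-1}{|x|^2}(1+2u^2)u'$, gives $\langle\L_+\phi,\phi\rangle<0$ and hence $\inf\si(\L_+)<0$ for every $N\ge2$ in one line. The paper obtains $\inf\si(\L_+)<0$ only indirectly, by a contradiction argument based on Serrin's comparison principle and the fact that the positive part of $\pa_{x_1}u$ vanishes on a half-space, and it verifies the sign of a quadratic form directly only for $N=2$ via a Pohozaev identity; your computation supersedes both.

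However, the proof is incomplete exactly at the radial sector $\ell=0$, and the route you sketch there would not work as stated, for two reasons. First, reducing the problem to ``the second radial eigenvalue of $\L_{+,0}$ is strictly positive'' demands more than is needed: the paper explicitly states that it does \emph{not} know whether $0$ is the second eigenvalue of $\L_+$, and its proof never establishes nor uses any such ordering. Second, the identity $\L_+(\pa_\om u)=-u$ is not available here: for $N\ge2$ neither uniqueness of ground states nor the existence of a differentiable branch $\om\mapsto u_\om$ is known (the paper stresses this point), so $\pa_\om u$ is not a well-defined object. What actually closes the argument is a combination for which you already hold all the spectral ingredients, plus one ODE fact you are missing. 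Since $\mu_1=\inf\si(\L_+)<0$ (your test function) lies below the continuous spectrum $[\om,\wq)$, it is an eigenvalue, simple, with an eigenfunction $e_1>0$ (your divergence-form Perron--Frobenius argument, or the paper's maximum-principle argument). Hence any nontrivial radial $v\in\Ker\L_+$ satisfies $\int_{\R^N}v\,\bar e_1\,\D x=0$ and must change sign. The missing ingredient is then Kwong's disconjugacy/oscillation argument, already used by Selvitella and valid for the whole range $1<p<p_{\max}$: a solution of the radial equation $\L_{+,0}v=0$ that changes sign must be unbounded as $r\to\wq$, contradicting $v\in L^2(\R_+,r^{N-1}\D r)$. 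Node counting against an ordering of the radial eigenvalues --- the device you hoped to build --- is never needed; sign change plus disconjugacy finishes the proof.
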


\begin{proof} First we prove (\ref{eq: kernel 2}). The proof is
standard. In fact, we can use the argument of Selvitella \cite{Selvitella-2015}
since which is applicable to $p$ in the whole range of $1<p<p_{\max}$.
We give a proof here for the reader's convenience. 

First we use spherical harmonics to decompose functions $v\in H^{j}(\R^{N})$
for $j\in\N$. Denote by $-\De_{\S^{N-1}}$ the Laplacian-Beltrami
operator on the unit $N-1$ dimensional sphere $\S^{N-1}$ in $\R^{N}$.
Write 
\begin{eqnarray*}
M_{k}=\frac{(N+k-1)!}{(N-1)!k!}\quad\forall\, k\ge0, & \text{and } & M_{k}=0\quad\forall\, k<0.
\end{eqnarray*}
Denote by $Y_{k,l}$, $k=0,1,\ldots$ and $1\le l\le M_{k}-M_{k-2}$,
the spherical harmonics such that 
\[
-\De_{\S^{N-1}}Y_{k,l}=\la_{k}Y_{k,l}
\]
for all $k=0,1,\ldots$ and $1\le l\le M_{k}-M_{k-2}$, where 
\begin{eqnarray*}
\la_{k}=k(N+k-2) &  & \forall\, k\ge0
\end{eqnarray*}
are eigenvalues of $-\De_{\S^{N-1}}$ with multiplicities $M_{k}-M_{k-2}$.
In particular, we deduce that $\la_{0}=0$ is of multiplicity 1 with
$Y_{0,1}=1$, and $\la_{1}=N-1$ is of multiplicity $N$ with $Y_{1,l}=x_{l}/|x|$
for $1\le l\le N$. 

Then for any function $v\in H^{j}(\R^{N})$, we have 
\[
v(x)=v(r\Om)=\sum_{k=0}^{\wq}v_{k}(r)Y_{k}(\Om)
\]
with $r=|x|$ and $\Om=x/|x|$, where 
\begin{eqnarray}
v_{k}(r)=\int_{\S^{N-1}}v(r\Om)Y_{k}(\Om)\D\Om &  & \forall\, k\ge0.\label{eq: k-th component of v}
\end{eqnarray}
Note that $v_{k}\in H^{j}(\R_{+},r^{N-1}\D r)$ holds for all $k\ge0$
since $v\in H^{j}(\R^{N})$. 

Next, apply above decomposition to any function $v\in H^{1}(\R^{N})$.
We conclude that $\L_{-}v=0$ if and only if 
\[
\L_{-,k}v_{k}\equiv-v_{k}^{\prime\prime}-\frac{N-1}{r}v_{k}^{\prime}+\frac{\la_{k}}{r^{2}}v_{k}+\om v_{k}-(\De u^{2}+u^{p-1})v_{k}=0
\]
for all $k\ge0$, where $v_{k}$ is defined as in (\ref{eq: k-th component of v}).
Note that $\L_{-,k}$ is a self-adjoint operator acting on $L^{2}(\R_{+},r^{N-1}\D r)$
for all $k\in\N$.

First we consider $k=0$. In this case we have $\la_{0}=0$. By a
direct computation, we obtain that $\L_{-,0}u=0$. Since $u(r)>0$
for all $r>0$, we conclude in a standard way that $u$ is the first
eigenfunction and $0$ is the first simple eigenvalue of $\L_{-,0}$.
Thus we have 
\begin{equation}
\Ker\L_{-,0}=\text{span}\{u\}.\label{eq: kernel of L---0}
\end{equation}

Next consider $k\ge1$. We claim that for all $k\ge1$, there holds
\begin{equation}
\Ker\L_{-,k}=\{0\}.\label{eq: kernel of L---k}
\end{equation}
Indeed, since $\la_{k}>0$, we deduce that $\L_{-,k}>\L_{-,0}$ holds
in the sense of quadratic form, which implies that $\L_{-,k}w=0$
if and only if $w\equiv0$. This proves the claim. 

Finally, we infer from (\ref{eq: kernel of L---0}) and (\ref{eq: kernel of L---k})
that (\ref{eq: kernel 2}) holds. This finishes the proof of (\ref{eq: kernel 2}).

It remains to prove (\ref{eq: kernel 1}). We still use spherical
harmonic as above. Then $\L_{+}v=0$ for $v\in\X_{\C}(\R^{N})$ if
and only if for all $k=0,1,\ldots$, we have
\begin{equation}
\begin{aligned}\L_{+,k}v_{k} & \equiv-(1+2u^{2})\left(v_{k}^{\prime\prime}+\frac{N-1}{r}v_{k}^{\prime}-\frac{\la_{k}}{r^{2}}v_{k}\right)-4uu^{\prime}v_{k}^{\prime}+\om v_{k}\\
 & \qquad-(2u\De u+\De u^{2}+pu^{p-1})v_{k}=0.
\end{aligned}
\label{operator: L-+--k}
\end{equation}
For a detailed calculation of $\L_{+,k}$, we refer to Selvitella
\cite{Selvitella-2015}. Note the fact that 
\begin{eqnarray*}
\pa_{x_{l}}u=u^{\prime}(|x|)\frac{x_{l}}{|x|}=u^{\prime}(r)Y_{1,l} &  & \text{for }1\le l\le N.
\end{eqnarray*}
Thus to prove (\ref{eq: kernel 1}), it is sufficient to prove that
\begin{eqnarray}
\L_{+,0}v_{0}=0 &  & \text{if and only if }v_{0}\equiv0,\label{eq: kernel 1.1}
\end{eqnarray}
and that
\begin{eqnarray}
\L_{+,1}v_{1}=0 &  & \text{if and only if }v_{1}\in\text{span}\left\{ u^{\prime}\right\} ,\label{eq: kernel 1.2}
\end{eqnarray}
and that
\begin{eqnarray}
\L_{+,k}v_{k}=0 &  & \text{if and only if }v_{k}\equiv0\label{eq: kernel 1.3}
\end{eqnarray}
for all $k\ge2$.

(\ref{eq: kernel 1.2}) and (\ref{eq: kernel 1.3}) can be proved
in the same way as that of (\ref{eq: kernel of L---0}) and (\ref{eq: kernel of L---k}).
Consider $k=1$. In this case we have $\la_{1}=N-1$. We deduce from
$\L_{+}\pa_{x_{1}}u=0$ that $\L_{+,1}u^{\prime}=0$. Since $u^{\prime}(r)<0$
for all $r>0$, we conclude in a standard way that $u^{\prime}$ is
the first eigenfunction and $0$ is the first simple eigenvalue of
$\L_{+,1}$. This proves (\ref{eq: kernel 1.2}). To conclude (\ref{eq: kernel 1.3}),
it is enough to notice that $\L_{+,k}>\L_{+,1}$ for any $k>1$. This
proves (\ref{eq: kernel 1.3}). 

We leave the proof of (\ref{eq: kernel 1.1}) in the next subsection.
The proof of Proposition \ref{prop: two kernels} is complete. Thus
the proof of Theorem \ref{thm: main result} is complete. \end{proof} 

We remark that (\ref{eq: kernel 2}) can be proved in a more compact
way. Indeed, note that $\L_{-}u=0$ since $u$ solves equation (\ref{eq: Object equ.}).
Thus $u$ is an eigenfunction of $\L_{-}$ with eigenvalue 0. Moreover,
recall that $u$ is a positive eigenfunction. We can conclude in a
standard way that $0$ is the first eigenvalue of $\L_{-}$ and is
simple. Hence $\Ker\L_{-}=\text{span}\{u\}$. See similar discussions
in Chang et al. \cite{Chang et al-2007}.

\subsection{Proof of (\ref{eq: kernel 1.1}) }

Let us first briefly review the proof of (\ref{eq: kernel 1.1}) of
Selvitella \cite{Selvitella-2015}. Suppose that $v_{0}$ belongs
to $L^{2}(\R_{+},r^{N-1}\D r)$, $v_{0}\not\equiv0$ and satisfies
$\L_{+,0}v_{0}=0$. His proof (see Lemma 4.4 of Selvitella \cite{Selvitella-2015})
contains two ingredients. First he proved that $v_{0}(r)$ changes
sign at least once for $r>0$, and then by the disconjugacy interval
argument of Kwong \cite{Kwong1989} he deduced that $v_{0}(r)$ is
unbounded for $r>0$ sufficiently large, which contradicts to $v_{0}\in L^{2}(\R_{+},r^{N-1}\D r)$.
In this way Selvitella \cite{Selvitella-2015} proved (\ref{eq: kernel 1.1}).
To prove that $v_{0}$ changes sign at least once on $\R_{+}$, Selvitella
\cite{Selvitella-2015} used an ordinary differential equation analysis,
in which the assumption $p\ge3$ is needed (see Section 3 of Selvitella
\cite{Selvitella-2015}). While the disconjugacy interval argument
applies to the whole range $1<p<p_{\max}$. 

Taking into account above review, we infer that (\ref{eq: kernel 1.1})
can be deduced from the following result together with the disconjugacy
interval argument as that of Kwong \cite{Kwong1989} and Selvitella
\cite{Selvitella-2015}. 

\begin{proposition} \label{prop:  changing sign proposition} Let
$\L_{+,0}$ be defined as in (\ref{operator: L-+--k}) with $k=0$.
Suppose that $v$ belongs to $L^{2}(\R_{+},r^{N-1}\D r)$, $v\not\equiv0$
and satisfies $\L_{+,0}v=0$. Then $v(r)$ changes sign at least once
for $r>0$. \end{proposition}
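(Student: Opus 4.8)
The plan is to argue by contradiction. Suppose $v\in L^{2}(\R_{+},r^{N-1}\D r)$ satisfies $\L_{+,0}v=0$, $v\not\equiv0$, and $v$ does \emph{not} change sign on $\R_{+}$; after replacing $v$ by $-v$ we may assume $v\ge0$. I would then produce the bottom (ground state) eigenfunction of $\L_{+,0}$ and contradict the nonnegativity of $v$ through the orthogonality of eigenfunctions belonging to distinct eigenvalues. Three ingredients drive the argument: (i) the essential spectrum of $\L_{+,0}$ is contained in $[\om,\wq)$ (from the spectral analysis of this section); (ii) $\L_{+,1}u^{\prime}=0$ with $u^{\prime}<0$ on $\R_{+}$, established above when proving \eqref{eq: kernel 1.2}; and (iii) the pointwise identity $\L_{+,k}=\L_{+,0}+\la_{k}(1+2u^{2})/r^{2}$ read off from \eqref{operator: L-+--k}.

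First I would show that $\mu_{0}:=\inf\si(\L_{+,0})<0$. Using $u^{\prime}$ as a test function together with (iii) for $k=1$ and $\langle\L_{+,1}u^{\prime},u^{\prime}\rangle=0$, one gets
\[
\langle\L_{+,0}u^{\prime},u^{\prime}\rangle=-\la_{1}\int_{0}^{\wq}(1+2u^{2})\frac{(u^{\prime})^{2}}{r^{2}}r^{N-1}\D r<0,
\]
where the integral converges since $u^{\prime}(r)\sim cr$ near the origin, and is strictly negative because $\la_{1}=N-1>0$ for $N\ge2$. By the Rayleigh quotient characterization this forces $\mu_{0}<0$. As $\mu_{0}<0$ lies strictly below the essential spectrum by (i), it is an isolated eigenvalue of finite multiplicity; let $\phi_{0}$ be an associated eigenfunction. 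Since the quadratic form of $\L_{+,0}$ is unchanged under $\phi_{0}\mapsto|\phi_{0}|$ (the $\la_{0}/r^{2}$ term is absent), $|\phi_{0}|$ is again a minimizer, so I may take $\phi_{0}\ge0$, $\phi_{0}\not\equiv0$.

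Next I would establish the strict positivity $\phi_{0}>0$ on $\R_{+}$. Dividing $\L_{+,0}\phi_{0}=\mu_{0}\phi_{0}$ by $1+2u^{2}>0$ turns it into a regular second-order linear ODE $\phi_{0}^{\prime\prime}+a(r)\phi_{0}^{\prime}+b(r)\phi_{0}=0$ with $a,b$ smooth on $(0,\wq)$. If $\phi_{0}(r_{0})=0$ at some interior $r_{0}>0$, then $r_{0}$ is a minimum of the nonnegative function $\phi_{0}$, so $\phi_{0}^{\prime}(r_{0})=0$, and uniqueness for the initial value problem forces $\phi_{0}\equiv0$, a contradiction; hence $\phi_{0}>0$. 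This positivity step is exactly the point flagged in the introduction as nonobvious for the quasilinear operator, and I expect it to be the main obstacle. Its resolution is that the radial reduction makes $\L_{+,0}$ a genuine Sturm--Liouville operator, indeed $\L_{+,0}w=-r^{1-N}\frac{\D}{\D r}[(1+2u^{2})r^{N-1}w^{\prime}]+Vw$ with $V=\om-(2u\De u+\De u^{2}+pu^{p-1})$, so the one-dimensional maximum principle applies despite the variable coefficient $1+2u^{2}$ and the first-order term coming from the quasilinear part.

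Finally, I would invoke orthogonality. Both $v$ (eigenvalue $0$) and $\phi_{0}$ (eigenvalue $\mu_{0}$) are $L^{2}$ solutions of the ODE; since $0<\om$ lies below the essential spectrum, each decays exponentially at infinity and is regular at the origin. Applying Green's identity to the Sturm--Liouville form above, the boundary contribution $[-(1+2u^{2})r^{N-1}(v^{\prime}\phi_{0}-v\phi_{0}^{\prime})]_{0}^{\wq}$ vanishes at $\wq$ by exponential decay and at $0$ because $r^{N-1}\to0$ for $N\ge2$, which yields $-\mu_{0}\langle v,\phi_{0}\rangle=0$ and hence $\langle v,\phi_{0}\rangle=0$ since $\mu_{0}\ne0$. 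But $v\ge0$, $v\not\equiv0$ and $\phi_{0}>0$ give $\langle v,\phi_{0}\rangle=\int_{0}^{\wq}v\phi_{0}\,r^{N-1}\D r>0$, a contradiction. Therefore $v$ must change sign at least once on $\R_{+}$.
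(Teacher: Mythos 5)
Your proof is correct, and it takes a genuinely different route from the paper's. The paper never works with the radial operator $\L_{+,0}$ by itself: it proves that the continuous spectrum of the full operator $\L_{+}$ on $L^{2}(\R^{N})$ lies in $[\om,\wq)$, then shows that the first eigenvalue of $\L_{+}$ is negative and simple by \emph{contradiction} --- if $\mu_{1}=0$, the positive part $\phi_{+}$ of any kernel element is again an eigenfunction, Serrin's generalized comparison principle (Han--Lin, Theorem 2.10) forces $\phi_{+}>0$ everywhere in $\R^{N}$, and this is incompatible with the kernel element $\pa_{x_{1}}u$, whose positive part vanishes on a half-space --- and finally concludes that a radial kernel element, being orthogonal to the positive first eigenfunction of $\L_{+}$, must change sign. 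Your two key replacements both work and are more elementary. First, you prove $\inf\si(\L_{+,0})<0$ \emph{directly}, by testing with $u^{\prime}$ and combining $\L_{+,1}u^{\prime}=0$ with the identity $\L_{+,1}=\L_{+,0}+\la_{1}(1+2u^{2})/r^{2}$; this clean Rayleigh-quotient computation sidesteps exactly the difficulty the paper emphasizes (it can only verify $\langle\L_{+}u,u\rangle<0$ by direct computation when $N=2$, and otherwise must argue by contradiction). Second, you get positivity of the bottom eigenfunction from the Sturm--Liouville form of $\L_{+,0}$ and ODE uniqueness, where the paper invokes the elliptic strong maximum principle in $\R^{N}$; your observation that the radial reduction tames the quasilinear term is precisely right. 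What the paper's route buys in exchange is spectral information about the full operator $\L_{+}$ (negativity and simplicity of its first eigenvalue, positivity of its ground state), advertised in the introduction as results of independent interest; your argument only sees the radial sector, but that is all the proposition requires. Two minor points of hygiene: the containment of the essential spectrum in $[\om,\wq)$ is proved in the paper for $\L_{+}$, so you should note it passes to the restriction $\L_{+,0}$ (a singular Weyl sequence of radial functions for $\L_{+,0}$ is one for $\L_{+}$); and your claim that $v$ is ``regular at the origin'' --- needed so the boundary term in Green's identity vanishes when $N=2,3$, where the singular solution of the ODE is locally in $L^{2}(\R_{+},r^{N-1}\D r)$ --- should be justified by $v$ belonging to the operator domain of the self-adjoint realization of $\L_{+,0}$ (radial $H^{2}$), which is the same implicit reading of the hypothesis the paper uses; with that reading you could also skip Green's identity altogether and quote orthogonality of eigenfunctions of a self-adjoint operator at distinct eigenvalues, exactly as the paper does.
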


Proposition \ref{prop:  changing sign proposition} can be viewed
as a substitute of Proposition 3.10 of Selvitella \cite{Selvitella-2015}.
We use a spectrum analysis to prove Proposition \ref{prop:  changing sign proposition}. 

First we note that $\L_{+,0}$ is the restriction of $\L_{+}$ on
the sector $L_{\rad}^{2}(\R^{N})$, the subspace of radial functions
in $L^{2}(\R^{N})$. Indeed, for any $v\in L_{\rad}^{2}(\R^{N})$,
we have
\[
\begin{aligned}\L_{+}v & =-\De v-2u\De(uv)+\om v-(\De u^{2}+pu^{p-1})v\\
 & =-(1+2u^{2})\left(v^{\prime\prime}+\frac{N-1}{r}v^{\prime}\right)-4uu^{\prime}v^{\prime}+\om v-(2u\De u+\De u^{2}+pu^{p-1})v\\
 & =\L_{+,0}v
\end{aligned}
\]
 since $\la_{0}=0$. Thus we immediately find the following result
which is equivalent to Proposition \ref{prop:  changing sign proposition}. 

\begin{proposition}\label{prop: equiv result} Suppose that $v\in\Ker\L_{+}\cap L_{\rad}^{2}(\R^{N})$
is a nontrivial function. Then $v(x)=v(r)$ with $r=|x|$ changes
sign at least once for $r>0$. \end{proposition}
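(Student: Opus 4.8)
The plan is to prove Proposition~\ref{prop: equiv result} by a spectral contradiction argument, exploiting the fact, announced in the introduction, that $0$ sits strictly above the bottom of the spectrum of $\L_{+}$ restricted to the radial sector. Let $v\in\Ker\L_{+}\cap L_{\rad}^{2}(\R^{N})$ be nontrivial. Then $v$ lies in the operator domain $H^{2}(\R^{N})$ and solves $\L_{+}v=0$; since the coefficients are smooth (recall $u\in C^{\wq}$) and the principal part $-(1+2u^{2})\De$ is uniformly elliptic, elliptic regularity would first give $v\in C^{\wq}$, so that the assertion ``$v$ changes sign'' is meaningful pointwise. Because $u$ is radial, $\L_{+}$ leaves $L_{\rad}^{2}(\R^{N})$ invariant, and the computation just preceding the statement identifies its restriction there with the self-adjoint operator $\L_{+,0}$; thus $v$ is a radial eigenfunction of $\L_{+,0}$ associated to the eigenvalue $0$.

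Next I would import the spectral information established in the earlier lemmas. By the essential-spectrum lemma, $\si_{\mathrm{ess}}(\L_{+})\subseteq[\om,\wq)$ with $\om>0$, so the spectrum in $(-\wq,\om)$ is purely discrete; in particular $0$ is an isolated eigenvalue of finite multiplicity. By the lemma on the first eigenvalue, $\mu_{0}:=\inf\si(\L_{+})<0$ is a \emph{simple} eigenvalue whose ground state $\phi_{0}$ may be taken strictly positive. Since $\L_{+}$ commutes with rotations and $\mu_{0}$ is simple, $\phi_{0}$ must be rotation-invariant, i.e. radial; hence $\phi_{0}\in L_{\rad}^{2}(\R^{N})$ and $\mu_{0}=\inf\si(\L_{+,0})$ as well. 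Consequently, within the radial sector $0$ is an eigenvalue strictly larger than the bottom eigenvalue $\mu_{0}$.

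The contradiction should then be immediate. Suppose $v$ did not change sign; replacing $v$ by $-v$ if necessary, I may assume $v\ge0$ with $v\not\equiv0$. Since $v$ and $\phi_{0}$ are eigenfunctions of the self-adjoint operator $\L_{+,0}$ for the distinct eigenvalues $0$ and $\mu_{0}$, they are orthogonal: $\langle v,\phi_{0}\rangle_{L^{2}}=0$. On the other hand, $\phi_{0}>0$ everywhere together with $v\ge0$, $v\not\equiv0$, forces $\langle v,\phi_{0}\rangle=\int_{\R^{N}}v\,\phi_{0}\,\D x>0$, a contradiction. Hence $v$ changes sign at least once for $r>0$, which is the assertion; through the identification of $\L_{+,0}$ with $\L_{+}$ on $L_{\rad}^{2}(\R^{N})$, this also yields the equivalent Proposition~\ref{prop:  changing sign proposition}.

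I expect the genuinely delicate step to lie not in this proposition but in the spectral input it consumes, namely that $\mu_{0}<0$ and that the ground state $\phi_{0}$ is simple and strictly positive. The quasilinear term $-2u\De(u\,\cdot\,)$ blocks a direct appeal to the usual Perron--Frobenius and maximum-principle theory for Schr\"odinger operators, and, as already noted in the introduction, even the sign of $\langle\L_{+}u,u\rangle$ is unclear for $N\ge3$; so establishing negativity of $\mu_{0}$ and positivity of $\phi_{0}$ is the main obstacle, which I would discharge by invoking the earlier lemmas. Once those facts are granted, the remaining points are routine for self-adjoint operators: the radiality of $\phi_{0}$ (ensuring it lies in the same sector as $v$) and the orthogonality of eigenfunctions for distinct eigenvalues.
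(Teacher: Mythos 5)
Your proposal is correct and follows essentially the same route as the paper: both proofs combine Lemma \ref{lem: essential spectrum} (so that $0\in\si_{\text{disc}}(\L_{+})$) with Lemma \ref{lem: first eigenvalue is simple and negative}, and then use orthogonality of eigenfunctions for distinct eigenvalues against the strictly positive ground state eigenfunction to force a sign change of $v$. The only cosmetic difference is that you pass to the radial sector and argue by contradiction (which requires your extra, correct, observation that simplicity plus rotation invariance makes $\phi_{0}$ radial), whereas the paper works directly in $L^{2}(\R^{N})$ and reads off the sign change from $\int_{\R^{N}}v\bar{e}_{1}\,\D x=0$.
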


The idea to prove Proposition \ref{prop: equiv result} is as follows.
Note that $0$ belongs to the spectrum $\si(\L_{+})$ of $\L_{+}$,
since it is straightforward to verify that 
\[
\text{span}\left\{ \pa_{x_{1}}u,\cdots,\pa_{x_{N}}u\right\} \subset\Ker\L_{+}.
\]
We will show that $0$ belongs to the discrete spectrum $\si_{\text{disc}}(\L_{+})$
of $\L_{+}$, that is, $0$ is an isolated eigenvalue of $\L_{+}$
and the corresponding eigenfunction space is finite dimensional. We
also show that $0$ is not the first eigenvalue of $\L_{+}$. Then
we have $\int_{\R^{N}}ve_{1}\D x=0$, where $e_{1}$ is the first
eigenfunction of $\L_{+}$. This fact will imply that $v=v(r)$ changes
sign for $r>0$, once we prove that $e_{1}$ does not change sign
in $\R^{N}$. 

It is easy to verify that $\L_{+}$ is a self-adjoint operator acting
on $L^{2}(\R^{N})$ with form domain $\X_{\C}$ and domain $H^{2}(\R^{N})$.
Hence we have $\si(\L_{+})\subset\R$. Furthermore, by Weyl's theorem
(see Theorem 7.2 of Hislop and Sigal \cite{Hislop-Sigal-1996}) we
have $\si(\L_{+})=\si_{\text{disc}}(\L_{+})\cup\si_{\text{cont}}(\L_{+})$,
and $\si_{\text{disc}}(\L_{+})\cap\si_{\text{cont}}(\L_{+})=\emptyset$,
where $\si_{\text{cont}}(\L_{+})$ denotes the continuous spectrum
of $\L_{+}$. Let us now start the proof of Proposition \ref{prop: equiv result}
with an estimate on $\si_{\text{cont}}(\L_{+})$. Recall that a constant
$\la$ belongs to $\si_{\text{cont}}(\L_{+})$ if and only if there
exists a sequence $\phi_{n}\in H^{2}(\R^{N})$, $n=1,2,\ldots$, such
that 
\begin{eqnarray}
 &  & \|\L_{+}\phi_{n}-\la\phi_{n}\|_{2}\to0\qquad\text{as }n\to\wq,\text{ and }\label{eq: asym. eigenf.}\\
 &  & \|\phi_{n}\|_{2}=1\qquad\text{ for all }n\in\N,\text{ and }\label{eq: normalizations}\\
 &  & \phi_{n}\wto0\qquad\text{ weakly in }L^{2}(\R^{N})\text{ as }n\to\wq.\label{eq: weak convergence}
\end{eqnarray}

\begin{lemma}\label{lem: essential spectrum} We have $\si_{\text{{\rm cont}}}(\L_{+})\subset[\om,\wq).$\end{lemma}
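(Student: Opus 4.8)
The plan is to exploit the Weyl-sequence characterization of $\si_{\text{cont}}(\L_+)$ recalled in (\ref{eq: asym. eigenf.})--(\ref{eq: weak convergence}) together with the exponential decay of $u$ from Theorem \ref{thm: properties of GS}(2). Suppose $\la\in\si_{\text{cont}}(\L_+)$ and let $\phi_n\in H^2(\R^N)$ be an associated Weyl sequence, so that $\|\phi_n\|_2=1$, $\phi_n\wto0$ in $L^2(\R^N)$, and $\|\L_+\phi_n-\la\phi_n\|_2\to0$. Pairing the last relation with $\phi_n$ and using $\|\phi_n\|_2=1$ gives $|\langle\L_+\phi_n-\la\phi_n,\phi_n\rangle|\le\|\L_+\phi_n-\la\phi_n\|_2\to0$, hence $\langle\L_+\phi_n,\phi_n\rangle\to\la$. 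I would then bound the quadratic form from below and show that its limit is at least $\om$, which forces $\la\ge\om$.

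To carry this out I would integrate by parts in the quasilinear term. Writing $V:=\De u^2+pu^{p-1}$ and recalling that $u$ is real, one has $-2\int_{\R^N}u\bar\phi_n\,\De(u\phi_n)\,\D x=2\int_{\R^N}|\na(u\phi_n)|^2\,\D x$, so that
\[
\langle\L_+\phi_n,\phi_n\rangle=\int_{\R^N}|\na\phi_n|^2\,\D x+2\int_{\R^N}|\na(u\phi_n)|^2\,\D x+\om-\int_{\R^N}V|\phi_n|^2\,\D x.
\]
The two gradient terms are nonnegative; discarding them yields
\[
\langle\L_+\phi_n,\phi_n\rangle\ge\om-\int_{\R^N}V|\phi_n|^2\,\D x.
\]
The crucial gain here is that the second-order quasilinear term contributes a \emph{nonnegative} quantity rather than an indefinite perturbation, so no smallness on $u$ is needed. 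It therefore remains to prove that $\int_{\R^N}V|\phi_n|^2\,\D x\to0$.

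This last limit is the heart of the argument. First, since $\langle\L_+\phi_n,\phi_n\rangle$ converges and $V$ is bounded (indeed smooth and, by Theorem \ref{thm: properties of GS}(2), exponentially decaying), the displayed identity shows that $\int_{\R^N}|\na\phi_n|^2\,\D x$ stays bounded; together with $\|\phi_n\|_2=1$ this makes $\{\phi_n\}$ bounded in $H^1(\R^N)$. Next, fix $\ep>0$ and choose $R>0$ with $\sup_{|x|>R}|V(x)|<\ep$, which is possible by the decay of $V$. On the far region $\int_{|x|>R}|V|\,|\phi_n|^2\,\D x\le\ep\|\phi_n\|_2^2=\ep$, while on the ball $B_R$ the Rellich--Kondrachov theorem and $\phi_n\wto0$ give $\phi_n\to0$ strongly in $L^2(B_R)$, whence $\int_{|x|\le R}|V|\,|\phi_n|^2\,\D x\le\|V\|_\wq\|\phi_n\|_{L^2(B_R)}^2\to0$. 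Thus $\limsup_n\int_{\R^N}V|\phi_n|^2\,\D x\le\ep$, and letting $\ep\to0$ gives the claim. Combining this with the lower bound and $\langle\L_+\phi_n,\phi_n\rangle\to\la$ yields $\la\ge\om$, so $\si_{\text{cont}}(\L_+)\subset[\om,\wq)$.

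I expect the step $\int_{\R^N}V|\phi_n|^2\,\D x\to0$ to be the main obstacle: one must upgrade the weak convergence $\phi_n\wto0$ to strong local convergence, which requires first extracting the $H^1$-bound from the boundedness of the quadratic form and then invoking local compactness, with the exponential decay of $V$ from Theorem \ref{thm: properties of GS}(2) being exactly what makes the tail negligible. A minor technical point to verify is that the integration by parts producing $2\int_{\R^N}|\na(u\phi_n)|^2\,\D x$ carries no boundary contribution, which is immediate since $u\phi_n\in H^2(\R^N)$.
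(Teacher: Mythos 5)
Your proof is correct and follows essentially the same route as the paper's: a Weyl sequence, the quadratic form identity obtained by integrating the quasilinear term by parts, an $H^1$ bound extracted from that identity, Rellich--Kondrachov to upgrade $\phi_n\wto0$ to strong $L^2_{\loc}$ convergence, and the exponential decay of $\De u^{2}+pu^{p-1}$ to kill the tail. The only differences are cosmetic: you argue directly that $\la\ge\om$ while the paper assumes $\la<\om$ and derives the contradiction $\|\phi_n\|_2\to0$, and you correctly record the factor $2$ in front of $\int_{\R^N}|\na(u\phi_n)|^2\,\D x$ (the paper writes coefficient $1$, a harmless slip since only nonnegativity is used).
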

\begin{proof}
Since $\L_{+}$ is self-adjoint, we have $\si(\L_{+})\subset\R$.
So it is sufficient to prove that if $\la<\om$, then $\la\not\in\si_{\text{cont}}(\L_{+})$.
We argue by contradiction. Suppose, on the contrary, that $\la<\om$
is a real number and $\la\in\si_{\text{cont}}(\L_{+})$. Then there
exists a sequence $\{\phi_{n}\}_{n=1}^{\wq}\subset H^{2}(\R^{N})$
such that (\ref{eq: asym. eigenf.})-(\ref{eq: weak convergence})
hold. We claim that, up to a subsequence, 
\begin{eqnarray}
\phi_{n}\to0 &  & \text{strongly in }L^{2}(\R^{N}).\label{esti: estimate 1}
\end{eqnarray}
Then we reach to a contradiction to (\ref{eq: normalizations}) and
Lemma \ref{lem: essential spectrum} is proved. 

We prove (\ref{esti: estimate 1}) as follows. Note that $\De u^{2}+pu^{p-1}$
is bounded in $\R^{N}$ by Theorem \ref{thm: properties of GS}. Thus
we obtain that 
\[
\sup_{n}\int_{\R^{N}}(\om-\la+|\De u^{2}+pu^{p-1}|)|\phi_{n}|^{2}\D x<\wq.
\]
On the other hand, we have
\begin{equation}
\begin{aligned}o(1) & =\langle(\L_{+}-\la)\phi_{n},\phi_{n}\rangle\\
 & =\int_{\R^{N}}\left(|\na\phi_{n}|^{2}+|\na(u\phi_{n})|^{2}+(\om-\la-\De u^{2}-pu^{p-1})|\phi_{n}|^{2}\right)\D x.
\end{aligned}
\label{eq: first order esti}
\end{equation}
The first equality of above follows from (\ref{eq: asym. eigenf.})
and (\ref{eq: normalizations}). Therefore we derive directly from
(\ref{eq: first order esti}) that $|\na\phi_{n}|\in L^{2}(\R^{N})$
is bounded uniformly for all $n\in\N$. Hence $\phi_{n}\in H^{1}(\R^{N})$
is bounded uniformly for all $n$ in view of (\ref{eq: normalizations}).
In particular, we deduce, after possibly passing to a subsequence,
that 
\begin{eqnarray}
\phi_{n}\to0 &  & \text{strongly in }L_{\loc}^{2}(\R^{N}).\label{eq: local strong conver.}
\end{eqnarray}
Next we recall that the function $\De u^{2}+pu^{p-1}$ decays exponentially
to zero at infinity by Theorem \ref{thm: properties of GS}. Combining
this fact together with (\ref{eq: local strong conver.}) gives us
that
\begin{equation}
\int_{\R^{N}}|\De u^{2}+pu^{p-1}||\phi_{n}|^{2}\D x\to0\label{eq: vanishing term 1}
\end{equation}
as $n\to\wq$. Combining (\ref{eq: vanishing term 1}) with (\ref{eq: first order esti})
and recalling that $\om>\la$, we obtain that 
\[
\lim_{n\to\wq}\int_{\R^{N}}|\phi_{n}|^{2}\D x=0,
\]
which contradicts to the assumption (\ref{eq: normalizations}). The
proof of Lemma \ref{lem: essential spectrum} is complete.
\end{proof}
A direct consequence of Lemma \ref{lem: essential spectrum} is that
$0\in\si_{\text{disc}}(\L_{+})$. Lemma \ref{lem: essential spectrum}
also allows us to derive a variational characterization for eigenvalues
of $\L_{+}$ that are below the infimum of $\si_{\text{cont}}(\L_{+})$.
Indeed, suppose that we have eigenvalues 
\[
\inf\si(\L_{+})\equiv\mu_{1}\le\mu_{2}\le\cdots\le\mu_{n}<\inf\si_{\text{{\rm cont}}}(\L_{+}).
\]
The fact $\mu_{1}>-\wq$ follows easily from the elementary estimate
\[
\inf_{\xi\in\X_{\C},\|\xi\|_{2}=1}\langle\L_{+}\xi,\xi\rangle>-\wq.
\]
Then we have 
\[
\mu_{1}=\inf\left\{ \langle\L_{+}\xi,\xi\rangle:\xi\in\X_{\C},\|\xi\|_{2}=1\right\} .
\]
Denoting by $W_{k}$, $2\le k\le n$, the linear space spanned by
the first $n-1$ eigenfunctions corresponding to $\mu_{1}$, $\ldots$,
$\mu_{n-1}$, we have by induction 
\[
\mu_{n}=\inf\left\{ \langle\L_{+}\xi,\xi\rangle:\xi\in\X_{\C},\|\xi\|_{2}=1,\text{ and }\xi\bot W_{n}\right\} .
\]
Furthermore, for any function $\xi\in\X_{\C}$ with $\|\xi\|_{2}=1$,
$\xi\bot W_{n}$ and $\langle\L_{+}\xi,\xi\rangle=\mu_{n}$, $\xi$
is a linear combination of eigenfunctions corresponding to $\mu_{n}$. 

Next we prove that $\L_{+}$ satisfies Perron-Frobenius property.
That is, if $\inf\si(\L_{+})$ is an eigenvalue, then it is simple
and the corresponding eigenfunction can be chosen strictly positive.
In the case of equation (\ref{eq: NLS}), it is easy to verify that
the operator ${\cal A}_{+}$ satisfies the Perron-Frobenius property
as follows. Let $\xi_{1}\in L^{2}(\R^{N})$ is an eigenfunction of
${\cal A}_{+}$ with eigenvalue $a_{1}=\inf\si({\cal A}_{+})<0$.
Then $\xi_{1}$ solves equation 
\[
-\De\xi+(\om-a_{1})\xi_{1}=qQ^{q-1}\xi_{1}.
\]
Since above equation is linear, we can assume that $\xi_{1,+}=\max\{\xi_{1},0\}\not\equiv0$.
Then we obtain, by multiplying above equation by $\xi_{1,+}$, that
$\langle{\cal A}_{+}\xi_{1,+},\xi_{1,+}\rangle=0$, which implies
that $\xi_{1,+}$ is also an eigenfunction of ${\cal A}_{+}$ with
eigenvalue $a_{1}$. Therefore, $\xi_{1,+}$ satisfies the equation
of $\xi_{1}$ as well. Finally, note that $\om-a_{1}>0$. Thus we
have 
\[
\xi_{1,+}=\frac{1}{-\De+(\om-a_{1})}qQ^{q-1}\xi_{1,+}.
\]
Since $Q>0$ for all $x\in\R^{N}$ and since the integral kernel of
the operator $\frac{1}{-\De+(\om-a_{1})}$ is positive away from the
origin, we conclude from above formula that $\xi_{1,+}(x)>0$ for
all $x\in\R^{N}$. Thus $\xi_{1}=\xi_{1,+}$ is strictly positive
in $\R^{N}$. This shows that ${\cal A}_{+}$ satisfies the Perron-Frobenius
property. However, in our case, we do not know whether the first eigenvalue
$\mu_{1}$ of $\L_{+}$ is negative or not. Due to the presence of
the quasilinear term $u\De(u\cdot)$, we can not use above simple
argument to assert that $\L_{+}$ satisfies Perron-Frobenius property.
Nevertheless, we can still deduce the following result. 

\begin{lemma} \label{lem: first eigenvalue is simple and negative}
The first eigenvalue $\mu_{1}$ of $\L_{+}$ is negative and simple.
\end{lemma}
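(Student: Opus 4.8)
The plan is to first establish the Perron--Frobenius property for $\L_+$---its lowest eigenvalue is simple with a strictly positive eigenfunction---and then to read off negativity from the sign-changing functions $\pa_{x_l}u$ sitting at eigenvalue $0$. Since $\L_+$ has real, smooth coefficients, every eigenfunction splits into its real and imaginary parts, each again an eigenfunction, so it suffices to argue with real-valued functions. Recall from (\ref{eq: first order esti}) that the associated quadratic form is
\[
\langle\L_+\xi,\xi\rangle=\int_{\R^N}\Big(|\na\xi|^2+|\na(u\xi)|^2+(\om-\De u^2-pu^{p-1})|\xi|^2\Big)\D x.
\]
First I would show the minimizer can be taken nonnegative. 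Because $u>0$, for real $\xi\in\X_\C$ one has $u|\xi|=|u\xi|$, together with the almost-everywhere identities $|\na|\xi||=|\na\xi|$ and $|\na|u\xi||=|\na(u\xi)|$; hence $\langle\L_+|\xi|,|\xi|\rangle=\langle\L_+\xi,\xi\rangle$. Since $0\in\si(\L_+)$ gives $\mu_1\le0$, while $\om\le\inf\si_{\text{cont}}(\L_+)$ by Lemma \ref{lem: essential spectrum}, the value $\mu_1$ lies strictly below the continuous spectrum and is attained by an eigenfunction $e_1$; the preceding identity lets us replace $e_1$ by $|e_1|\ge0$, still nontrivial.

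The hard part is to upgrade $e_1\ge0$ to $e_1>0$, which is precisely where the quasilinear term $-2u\De(u\,\cdot\,)$ obstructs the resolvent-positivity argument available for ${\cal A}_+$. My idea is to work directly with the eigenvalue equation instead of a resolvent. Expanding the quasilinear term, $\L_+e_1=\mu_1e_1$ becomes
\[
-(1+2u^2)\De e_1-4u\na u\cdot\na e_1+\big(\om-\mu_1-2u\De u-\De u^2-pu^{p-1}\big)e_1=0.
\]
By Theorem \ref{thm: properties of GS} the functions $u$, $\na u$, $\De u$ are smooth and bounded, and $1+2u^2\ge1$, so after dividing by $1+2u^2$ this is a uniformly elliptic equation with smooth, bounded coefficients. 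Elliptic regularity makes $e_1$ a classical solution, and the Harnack inequality for such operators (whose zeroth-order coefficient may have either sign) forces a nonnegative, nontrivial solution to be strictly positive throughout $\R^N$. Thus $e_1>0$.

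Simplicity and negativity then follow cleanly. If $\Ker(\L_+-\mu_1)$ had dimension at least two, I would choose a real eigenfunction $f$ orthogonal to $e_1$; since $\langle\L_+|f|,|f|\rangle=\langle\L_+f,f\rangle=\mu_1\|f\|_2^2$, the function $|f|$ is itself a minimizer, hence an eigenfunction for $\mu_1$, and the positivity step gives $|f|>0$. Then $f$ never vanishes, so it has a constant sign, forcing $\int_{\R^N}e_1f\,\D x\ne0$ and contradicting orthogonality; therefore $\mu_1$ is simple. Finally, $\pa_{x_1}u=u'(r)x_1/|x|$ lies in $\Ker\L_+$, i.e. is an eigenfunction with eigenvalue $0$, and it changes sign because $u'(r)<0$. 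Since the first eigenfunction $e_1$ is strictly positive, $0$ cannot be the first eigenvalue; as $\mu_1=\inf\si(\L_+)\le0$, we conclude $\mu_1<0$.
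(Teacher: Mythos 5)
Your proof is correct and follows essentially the same route as the paper: both arguments rest on the invariance of the quadratic form under $\xi\mapsto|\xi|$ (the paper uses $\xi\mapsto\xi_{+}$), on the strong maximum principle/Harnack inequality for the expanded uniformly elliptic operator $-(1+2u^{2})\De-4u\na u\cdot\na+c(x)$ to upgrade a nonnegative eigenfunction to a strictly positive one, and on the sign-changing kernel element $\pa_{x_{1}}u$ to force the conclusion. The only difference is organizational --- the paper argues by contradiction from $\mu_{1}=0$ and applies the positive-part argument directly to $\pa_{x_{1}}u$, whereas you first establish the Perron--Frobenius property (strict positivity and simplicity of the ground state eigenfunction) and then deduce $\mu_{1}<0$; the mathematical content is identical.
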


\begin{proof} We have to show that $\mu_{1}<0$ holds and that eigenfunctions
corresponding to $\mu_{1}$ is of constant sign. We argue by contradiction.
Suppose that $\mu_{1}\ge0$ holds. Then the fact $0\in\si_{\text{disc}}(\L_{+})$
implies that $\mu_{1}=0$. Note that $\Ker\L_{+}\neq\emptyset$ is
the eigenfunction space corresponding to $0$. For any $\phi\in\Ker\L_{+}$,
we have that 
\[
-\De\phi-2u\De(u\phi)+\om\phi-(\De u^{2}+pu^{p-1})\phi=0.
\]
Since $u$ is a real valued function, we can assume, with no loss
of generality, that $\phi$ is a real valued function as well. Furthermore,
we can assume that the positive part $\phi_{+}=\max(\phi,0)$ is not
identically zero. Then multiply above equation by $\phi_{+}$. We
obtain by integrating by parts that 
\[
\langle\L_{+}\phi_{+},\phi_{+}\rangle=0.
\]
That is, $\langle\L_{+}\phi_{+},\phi_{+}\rangle$ achieves the first
eigenvalue 0. Thus $\phi_{+}$ is a combination of eigenfunctions
of $0$, which implies that $\phi_{+}$ satisfies equation 
\begin{equation}
-\De\phi_{+}-2u\De(u\phi_{+})+\om\phi_{+}-(\De u^{2}+pu^{p-1})\phi_{+}=0.\label{eq: equ. of first eignef.}
\end{equation}
We claim that equation (\ref{eq: equ. of first eignef.}) implies
that 
\begin{eqnarray}
\phi_{+}(x)>0 &  & \text{for all }x\in\R^{N}.\label{eq: positivity claim}
\end{eqnarray}
Rewrite equation (\ref{eq: equ. of first eignef.}) in the form 
\begin{eqnarray}
-\De\phi_{+}-\sum_{i=1}^{N}b_{i}(x)\cdot\pa_{x_{i}}\phi_{+}+c(x)\phi_{+}=0 &  & \text{in }\R^{N}.\label{eq: equ. of first eignef.2}
\end{eqnarray}
By Theorem \ref{thm: properties of GS}, both functions
\begin{eqnarray*}
b_{i}(x)\equiv-\frac{4u}{1+2u^{2}}\pa_{x_{i}}u,\quad(1\le i\le N) & \text{and} & c(x)\equiv\frac{\om-2u\De u-\De u^{2}-pu^{p-1}}{1+2u^{2}}
\end{eqnarray*}
are bounded smooth functions. Thus elliptic regularity theory gives
us that $\phi_{+}\in C^{\wq}(\R^{N})$ holds. Now, by a famous generalized
comparison principle for second order elliptic equations due to Serrin
(see Theorem 2.10 of Han and Lin \cite[Chapter 2]{Han-Lin-Book}),
we deduce from equation (\ref{eq: equ. of first eignef.2}) that (\ref{eq: positivity claim})
holds. This proves the claim.

Recall that $\pa_{x_{1}}u\in\Ker\L_{+}$. Take $\phi=\pa_{x_{1}}u=u^{\prime}(|x|)x_{1}/|x|$.
Since $u^{\prime}(|x|)<0$ for $|x|>0$, we have that $\phi_{+}(x)\equiv0$
for any $x\in\R^{N}$ with $x_{1}\ge0$. We obtain a contradiction
to (\ref{eq: positivity claim}). Hence we conclude that $\mu_{1}<0$.

Finally, by similar arguments as above, we infer that any eigenfunction
corresponding to $\mu_{1}$ is either positive or negative in $\R^{N}$.
This proves that $\mu_{1}$ is simple. The proof of Lemma \ref{lem: first eigenvalue is simple and negative}
is complete. \end{proof}

Now we are able to prove Propositions \ref{prop:  changing sign proposition}
and \ref{prop: equiv result}.

\begin{proof}[Proof of Propositions \ref{prop:  changing sign proposition} and  \ref{prop: equiv result}]
It is enough to prove Proposition \ref{prop: equiv result} due to
the equivalence. For any function $v\in\Ker\L_{+}\cap L_{\rad}^{2}(\R^{N})$,
$v\not\equiv0$, we obtain from above that 
\[
\int_{\R^{N}}v\bar{e}_{1}\D x=0
\]
holds for any eigenfunction $e_{1}$ of $\L_{+}$ with eigenvalue
$\mu_{1}$. Since $e_{1}$ can be chosen strictly positive in $\R^{N}$,
we infer that $v(x)=v(r)$ with $r=|x|$ must change sign for $r>0$.
This proves Proposition \ref{prop: equiv result}. So follows Proposition
\ref{prop:  changing sign proposition}.\end{proof}

We end this section by showing that $\inf\si(\L_{+})<0$ holds for
$N=2$ via direct computations. Precisely, we show that 
\begin{equation}
\langle\L_{+}u,u\rangle<0.\label{esti: N equals to 2}
\end{equation}
 (\ref{esti: N equals to 2}) follows from a Pohozaev type identity.
Since $u$ solves equation (\ref{eq: Object equ.}), an elementary
calculation gives the following Pohozaev type identity 
\[
\om\int_{\R^{2}}|u|^{2}\D x=\frac{2}{p+1}\int_{\R^{2}}|u|^{p+1}\D x.
\]
Here we used the fact $N=2$. On the other hand, multiplying equation
(\ref{eq: Object equ.}) by $u$ and integrating by parts yields
\[
\int_{\R^{2}}|\na u|^{2}\D x+4\int_{\R^{2}}|u|^{2}|\na u|^{2}\D x+\om\int_{\R^{2}}|u|^{2}\D x=\int_{\R^{2}}|u|^{p+1}\D x.
\]
Recall that 
\[
\langle\L_{+}u,u\rangle=8\int_{\R^{2}}|u|^{2}|\na u|^{2}\D x-(p-1)\int_{\R^{2}}|u|^{p+1}\D x.
\]
Combining above three identities, we deduce that
\[
\langle\L_{+}u,u\rangle=-2\int_{\R^{2}}|\na u|^{2}\D x-\frac{(p-1)^{2}}{p+1}\int_{\R^{2}}|u|^{p+1}\D x<0.
\]
This proves (\ref{esti: N equals to 2}). Thus we conclude that $\inf\si(\L_{+})<0$
holds for $N=2$. 

\emph{Acknowledgment. }
The author is financially supported by the Academy of Finland, project
259224.


\begin{thebibliography}{10}
\bibitem{Ambrosetti-Malchiodi-book} \textsc{A. Ambrosetti and A. Malchiodi,}
\emph{Perturbation Methods and Semilinear Elliptic Problems on $\mathbf{R}^{n}$.}
Progress in Mathematics, 240. Birkh\"auser Verlag, Basel, 2006.

\bibitem{Chang et al-2007} \textsc{S.-M. Chang, S. Gustafson, K. Nakanishi and T.-P. Tsai,}
\emph{Spectra of linearized operators for NLS solitary waves.} SIAM
J. Math. Anal. \textbf{39} (2007/08), no. 4, 1070-1111. 

\bibitem{Colin-2002} \textsc{M. Colin}, \emph{On the local well-posedness
of quasilinear Schr\"odinger equations in arbitrary space dimension.}
Comm. Partial Differ. Equ. \textbf{27} (2002), 325-354 .

\bibitem{Colin-Jeanjean-2004} \textsc{M. Colin and L. Jeanjean},
\emph{Solutions for a quasilinear Schr\"odinger equation: a dual
approach.} Nonlinear Anal. \textbf{56} (2004), 213-226.

\bibitem{Colin-Jeanjean- Squassina-2010}  \textsc{M. Colin, L. Jeanjean and M. Squassina},
\emph{Stability and instability results for standing waves of quasi-linear
Schr\"odinger equations.} Nonlinearity \textbf{23} (2010), 1353-1385.

\bibitem{Frank-Lenzmann-2013}   \textsc{R. Frank and E. Lenzmann,}
\emph{Uniqueness of non-linear ground states for fractional Laplacians
in $\R$.} Acta Math. \textbf{210} (2013), no. 2, 261-318. 

\bibitem{Frank-Lenzmann-Silvestre-2013} \textsc{R. Frank, E.  Lenzmann and L. Silvestre,}
\emph{Uniqueness of radial solutions for the fractional Laplacian.}
Preprint at arXiv:1302.2652v1. 

\bibitem{G-N-N-1981} \textsc{B. Gidas, W.M.  Ni and L. Nirenberg},
\emph{Symmetry of positive solutions of nonlinear elliptic equations
in $\mathbf{R}^{n}$.} Mathematical analysis and applications, Part
A, pp. 369\textendash 402, Adv. in Math. Suppl. Stud., 7a, Academic
Press, New York-London, 1981. 

\bibitem{Han-Lin-Book} \textsc{Q. Han and  F.-H. Lin,} \emph{Elliptic
partial differential equations.} Courant Lecture Notes in Mathematics,
1. New York University, Courant Institute of Mathematical Sciences,
New York; American Mathematical Society, Providence, RI, 1997.

\bibitem{Hislop-Sigal-1996} \textsc{P.D. Hislop and I.M.  Sigal,}
\emph{Introduction to spectral theory.} \emph{With applications to
Schr\"odinger operators.} Applied Mathematical Sciences, 113. Springer-Verlag,
New York, 1996.

\bibitem{Kenig-Ponce-Vega-2004} \textsc{C.E. Kenig, G. Ponce and L. Vega},
\emph{The Cauchy problem for quasi-linear Schr\"odinger equations.}
Invent. Math. \textbf{158} (2004), 343-388. 

\bibitem{Kwong1989}   \textsc{M.K. Kwong,} \emph{Uniqueness of positive
solutions of $\De u-u+u^{p}=0$ in $\mathbf{R}^{n}$.} Arch. Rational
Mech. Anal. \textbf{105} (1989), no. 3, 243-266. 

\bibitem{Lange-Poppenberg-Teismann-1999} \textsc{H. Lange, M. Poppenberg and H. Teismann,}
\emph{Nash-Moser methods for the solution of quasi-linear Schr\"odinger
equations.} Comm. Partial Differ. Equ. \textbf{24} (1999), 1399-1418. 

\bibitem{Lenzmann-2009}  \textsc{E. Lenzmann,} \emph{Uniqueness
of ground states for pseudorelativistic Hartree equations.} Anal.
PDE \textbf{2} (2009), no. 1, 1-27.

\bibitem{Lieb-Loss-book}  \textsc{E.H. Lieb and M. Loss,} \emph{Analysis.}
2nd edn. Graduate Studies in Mathematics, Vol. 14 (Amer. Math. Soc.,
2001). 

\bibitem{Liu-Wang-Wang-2003} \textsc{J.-Q. Liu, Y.-Q. Wang and Z.-Q. Wang,}
\emph{Soliton solutions for quasi-linear Schr\"odinger equations
II.} J. Differ. Equ. \textbf{187} (2003), 473-493. 

\bibitem{Liu-Wang-Wang-2004} \textsc{J.-Q. Liu, Y.-Q. Wang and Z.-Q. Wang,}
\emph{Solutions for quasi-linear Schr\"odinger equations via the
Nehari method.} Comm. Partial Differ. Equ. \textbf{29} (2004), 879-901.

\bibitem{Liu- Wang-2003} \textsc{J.-Q. Liu and Z.-Q. Wang,} \emph{Solitons
solutions for quasi-linear Schr\"odinger equations.} Proc. Am. Math.
Soc.\textbf{ 131} (2003), 441-448.

\bibitem{Poppenberg-2001} \textsc{M. Poppenberg,} \emph{On the local
well posedness of quasi-linear Schr\"odinger equations in arbitrary
space dimension.} J. Differ. Equ. \textbf{172} (2001), 83-115. 

\bibitem{Poppenberg-Schmitt- Wang-2002} \textsc{M. Poppenberg, K. Schmitt and   Z.-Q. Wang,}
\emph{On the existence of soliton solutions to quasilinear Schr\"odinger
equations.} Calc. Var. Partial Differ. Equ. \textbf{14} (2002), 329-344.

\bibitem{Selvitella-2011} \textsc{A. Selvitella}, \emph{Uniqueness
and nondegeneracy of the ground state for a quasilinear Schr\"odinger
equation with a small parameter.} Nonlinear Anal. \textbf{74}(5) (2011),
1731-1737. 

\bibitem{Selvitella-2015} \textsc{A. Selvitella}, \emph{Nondegeneracy
of the ground state for quasilinear Schr\"odinger equations. }Calc.
Var. Partial Differ. Equ.\textbf{ 53 }(2015), 349-364.

\bibitem{Ziemer} \textsc{W.P. Ziemer}, \emph{Weakly differentiable
functions.} Graduate Texts in Mathematics, 120. Springer-Verlag, New
York, 1989. \end{thebibliography}
\end{document}